\renewcommand{\baselinestretch}{1.05}
\renewcommand{\baselinestretch}{1.05}
\newtheorem{Thm}{Theorem}[section]
\newtheorem{Lem}[Thm]{Lemma}
\theoremstyle{remark}
\theoremstyle{claim}
\newcommand{\eps}{\varepsilon}
\newcommand{\R}{\mathbb{R}}
\newcommand{\N}{{\mathbb{N}}}
\newcommand{\htop}{h_{\text{top}}}
\newcommand{\E}{\mathcal{E}}
\begin{document}

\title{\bf Multifractal Analysis of The New Level sets}
\author{{Yiwei Dong$^{*}$ and Xueting Tian$^{**}$}\\
{\em\small School of Mathematical Science,  Fudan University}\\
{\em\small Shanghai 200433, People's Republic of China}\\
{\small $^*$Email:dongyiwei0@sina.com;   $^{**}$Email:xuetingtian@fudan.edu.cn}\\
}
\date{}
\maketitle

\renewcommand{\baselinestretch}{1.2}
\large\normalsize

\footnotetext { Key words and phrases:  Multifractal analysis; Birkhoff average; Irregular points; Almost specification; Saturated set; Variational principle.}
\footnotetext {AMS Review: 37A35; 37B05; 37C45.  }

\begin{abstract}
By an appropriate definition, we divide the irregular set into level sets. Then we characterize the multifractal spectrum of these new pieces by calculating their entropies. We also compute the entropies of various intersections of the level sets of regular and irregular set which is rarely studied in the literature. Moreover, our conclusions also hold for the topological pressure. Finally, we consider the continuous case and use our results to give a description for the suspension flow.

\end{abstract}

\section{Introduction}
By a dynamical system $(X,T)$, we mean a continuous map $T$ acting on a compact metric space $(X,d)$.
In the classical framework for multifractal analysis of dynamical systems \cite{BPS,Pesin}, one begins with a local asymptotic quantity $\phi(x):X\to \R^d$ and defines the level sets $K(\alpha):=\{x\in X:\phi(x)=\alpha\}$. These $K(\alpha)$ form a multifractal decomposition and the
function $\alpha\mapsto \dim K(\alpha)$ is a multifractal spectrum. Here $\dim K(\alpha)$ is a global dimensional quantity that assigns to each level set of $\phi$ a `size' or `complexity',
such as its topological entropy or Hausdorff dimension.

For concrete, let us consider the ergodic average $\frac{1}{n}\sum_{i=0}^{n-1}\varphi(T^{i}(x))$ of a continuous function $\varphi\colon X\to \R$ along the orbit of some $x\in X$. By the classical Birkhoff ergodic theorem \cite{Walters}, if $\mu$ is a $T-$invariant measure, then for $\mu-$a.e. $x\in X$, such average converges. We call the set of such points $\varphi-$regular points and denote them by $R_{\varphi}$. Correspondingly, the complementary set is called $\varphi$-irregular set and we denote them by $I_{\varphi}$. The union $I(f)=\bigcup_{\varphi\in C(X)}$ is called `the set of points with historic behaviour' by Ruelle \cite{Ruelle}. The idea is that points for which the Birkhoff average does
not exist are capturing the `history' of the system, whereas points whose Birkhoff
average converge only see average behaviour.

For $x\in R_{\varphi}$, let $\phi(x)=\lim_n\frac{1}{n}\sum_{i=0}^{n-1}\varphi(T^{i}(x))$. We are in a position to give the multifractal analysis for the level sets $K(\alpha)$. Indeed, this is done by many authors. In the early time, the thermodynamic approach was widely used. For example, Fan and Feng \cite{Fan-Feng} considered symbolic systems and related the
pressure of an interaction function $\Phi$ to its long-term time averages through the
Hausdorff and packing dimensions of the subsets on which $\Phi$ has prescribed
long-term time-average values. Moreover, Barreira
\emph{et al} \cite{Barreira-Schmeling2002} established a higher-dimensional version of multifractal analysis for several classes of hyperbolic dynamical systems. Later on, Takens and Verbitskiy \cite{Takens-Verbitskiy} worked under the condition of specification property and provided a orbit-gluing method to build the variational principle for $K(\alpha)$.

From the viewpoint of the ergodic theory, the regular part seems to dominate the behavior of the systems. Nevertheless, various results show that the irregular part can also exhibit a rich structure. For example, Fan \emph{et al} \cite{FFW} paid attention to symbolic dynamics and showed that $I_{\varphi}(f)$ either is  empty or has full topological entropy. That is, $I_{\varphi}(f)$ can be as complex as the whole system. Later on, Barreira and Schmeling \cite{Barreira-Schmeling2000} studied a broad class of systems including conformal repellers and horseshoes and demonstrated that the irregular set carries full entropy and Hausdoff dimension. Moreover, for systems with the specification property, Chen \emph{et al} \cite{CTS} proved that the irregular set $I(f)$ has full topological entropy.

It is known that the topological mixing subshift of finite type has the specification property. It is also known that 
a compact locally maximal hyperbolic set of a topological mixing diffeomorphism has Bowen's specification property \cite{Bowen1971}. Besides, the continuous topologically mixing map of the
interval has Bowen¡¯s specification property \cite{Bl}. Thus, all the above mentioned systems satisfy the specification property.
However, in this paper, we concentrate on systems with the almost specification property which was introduced in \cite{PS2005} and later renamed by Thompson \cite{Thompson2012}. The reason lies in that the specification property implies the almost specification property \cite{PS2007}. Moreover, it is known that every $\beta$-shift satisfies the almost specification property \cite{PS2005} while the set of $\beta$ for which the $\beta$-shift has the specification property
has zero Lebesgue measure \cite{Buzzi}. Therefore, the almost specification property is a non-trivial generalization of the specification property. So we can work under a more general circumstance.

\subsection{Level sets for irregular points and the various variational principles}
Let us begin with some definitions.
For any continuous function $\varphi:X\rightarrow \mathbb{R}$, we define
\begin{displaymath}
\underline{\varphi}(x):=\liminf_{n\rightarrow\infty}\frac{1}{n}\sum_{i=0}^{n-1}\varphi(T^{i}(x))~~\textrm{and}~~\overline{\varphi}(x):=\limsup_{n\rightarrow\infty}\frac{1}{n}\sum_{i=0}^{n-1}\varphi(T^{i}(x)).
\end{displaymath}
Then we can divide $X$ into level sets:
\begin{displaymath}
X=\bigsqcup_{c\leq d} X_{\varphi}(c,d),
\end{displaymath}
where
\begin{equation}\label{def-level-set}
  X_{\varphi}(c,d):=\{x\in X~|~\underline{\varphi}(x)=c~\textrm{and}~\overline{\varphi}(x)=d\}.
\end{equation}
Define
$$R_{\varphi}(a):=X_{\varphi}(a,a),~~R_{\varphi}:=\bigsqcup_{a\in \mathbb{R}}R_{\varphi}(a)~~\textrm{and}~~I_{\varphi}:=\bigsqcup_{c<d}X_{\varphi}(c,d).$$
One can easily check that $R_{\varphi}$ and $I_{\varphi}$ defined here coincide with the classical regular and irregular set.

Now suppose $(X,T)$ satisfies the almost specification property. In \cite{Thompson2012}, Thompson showed that $I_{\varphi}$ either is empty or has full entropy. Here we have a finer description.
\begin{Thm}\label{thm-inter-reg-irreg}
For any $\varphi_{1}, \varphi_{2}\in C(X)$, $R_{\varphi_{1}}\cap I_{\varphi_{2}}$ has full entropy or is empty.
\end{Thm}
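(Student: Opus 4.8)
The plan is to show that nonemptiness of $R_{\varphi_1}\cap I_{\varphi_2}$ already forces full entropy; the reverse bound $\htop(R_{\varphi_1}\cap I_{\varphi_2})\le\htop(X,T)$ is automatic by monotonicity. So I assume the set is nonempty and fix $x$ in it, say with $\lim_n\frac1n\sum_{i=0}^{n-1}\varphi_1(T^ix)=a$ and $c:=\underline{\varphi_2}(x)<\overline{\varphi_2}(x)=:d$. First I would extract two invariant measures from $x$. Choosing $n_k\to\infty$ realizing the liminf of the $\varphi_2$-averages and passing to a weak-$*$ limit point of the empirical measures $\En(x):=\frac1n\sum_{i=0}^{n-1}\delta_{T^ix}$ produces an invariant $\mu_1$ with $\int\varphi_2\,d\mu_1=c$; the same procedure along the limsup gives $\mu_2$ with $\int\varphi_2\,d\mu_2=d$. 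Since the $\varphi_1$-averages converge to $a$, every such limit measure has $\int\varphi_1\,d\mu_i=a$. Thus $\int\varphi_1\,d\mu_1=\int\varphi_1\,d\mu_2=a$ while $\int\varphi_2\,d\mu_1\ne\int\varphi_2\,d\mu_2$.

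Next I would manufacture, for each small parameter, a family of high-entropy measures sharing a common $\varphi_1$-integral but spread out in $\varphi_2$. Fix $\eps>0$ and pick an invariant $m$ with $h_m(T)>\htop(X,T)-\eps$. For $t\in(0,1)$ set $m_t^{(i)}:=(1-t)m+t\mu_i$ and let $K_t$ be the segment $\{s\,m_t^{(1)}+(1-s)\,m_t^{(2)}:s\in[0,1]\}\subseteq\M(X,T)$, a compact connected set. Because $\int\varphi_1\,d\mu_1=\int\varphi_1\,d\mu_2$, every $\rho\in K_t$ has the same value $\int\varphi_1\,d\rho=(1-t)\int\varphi_1\,dm+t\int\varphi_1\,d\mu_1$, whereas $\int\varphi_2\,d\rho$ ranges over the nondegenerate interval with endpoints $(1-t)\int\varphi_2\,dm+t\int\varphi_2\,d\mu_i$. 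Moreover, since the entropy map is affine, $\inf_{\rho\in K_t}h_\rho(T)=\min_i h_{m_t^{(i)}}(T)\ge(1-t)\,h_m(T)\ge(1-t)(\htop(X,T)-\eps)$.

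Then I would invoke the saturated-set variational principle available under the almost specification property (the mechanism behind Thompson's theorem that $I_\varphi$ has full entropy): for a compact connected $K\subseteq\M(X,T)$, the set $G_K$ of points whose empirical measures have weak-$*$ limit set exactly $K$ satisfies $\htop(G_K)=\inf_{\rho\in K}h_\rho(T)$. Applying this to $K_t$, any $y\in G_{K_t}$ has $\frac1n\sum_{i=0}^{n-1}\varphi_1(T^iy)\to\int\varphi_1\,d\rho$ (the common value), so $y\in R_{\varphi_1}$; and the limit set of $\frac1n\sum_{i=0}^{n-1}\varphi_2(T^iy)$ is the nondegenerate interval $\{\int\varphi_2\,d\rho:\rho\in K_t\}$, so $y\in I_{\varphi_2}$. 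Hence $G_{K_t}\subseteq R_{\varphi_1}\cap I_{\varphi_2}$ and $\htop(R_{\varphi_1}\cap I_{\varphi_2})\ge(1-t)(\htop(X,T)-\eps)$; letting $t\to0$ and then $\eps\to0$ yields full entropy.

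The main obstacle is the saturated-set variational principle itself, specifically its lower bound $\htop(G_K)\ge\inf_{\rho\in K}h_\rho(T)$. This is where the almost specification property does the real work: one fixes a sequence of measures dense in $K$, uses almost specification to concatenate orbit segments that successively shadow generic points of these measures with rapidly increasing block lengths, and verifies that the resulting Moran-type Cantor set lies in $G_K$ and carries entropy at least $\inf_K h$. Handling the mismatch/gluing windows permitted by almost specification (rather than exact specification) is the delicate part, but this is precisely the machinery already set up for $I_\varphi$; the only genuinely new ingredient here is the elementary convexity bookkeeping of the previous step, which pins $\int\varphi_1$ to a single value while letting $\int\varphi_2$ vary.
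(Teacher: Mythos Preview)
Your proposal is correct and follows essentially the same path as the paper: extract $\mu_1,\mu_2$ from a point in the intersection, perturb both toward a near-maximal-entropy measure, take the segment $K$ between the two perturbed measures, apply the saturated-set variational principle to $G_K$, and check $G_K\subset R_{\varphi_1}\cap I_{\varphi_2}$.

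The one point worth flagging is your appeal to ``the saturated-set variational principle available under the almost specification property.'' In Pfister--Sullivan's general form (for arbitrary compact connected $K$), that principle requires almost specification \emph{and} uniform separation; almost specification alone is not known to suffice in general. The paper handles this by proving a tailored version (its Theorem~\ref{thm-var-prin}) precisely for $K$ equal to the convex hull of finitely many invariant measures, showing that in this special case the uniform separation hypothesis can be dropped. Since your $K_t$ is exactly such a segment, the paper's theorem covers it and your argument goes through---but you should be aware that this lower bound for $G_K$ under almost specification alone is itself one of the paper's contributions, not an off-the-shelf fact.
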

For the level set of $\varphi$-regular points, Takens and Verbitskiy \cite{Takens-Verbitskiy} showed the following variational principle:
$$\htop(T,R_{\varphi}(a))=\sup~\left\{h_{\mu}(T)~|~\mu\in M(T,X)~and~\int\varphi d\mu=a\right\}.$$
Here, we also have further result.
\begin{Thm}\label{thm-inter-level-irreg}
For any $\varphi_{1}, \varphi_{2}\in C(X)$, if $R_{\varphi_{1}}(a)\cap I_{\varphi_{2}}\neq\emptyset$, then
$$\htop\left(T,R_{\varphi_{1}}(a)\cap I_{\varphi_{2}}\right)=\sup~\left\{h_{\mu}(T)~|~\mu\in M(T,X)~and~\int\varphi_{1} d\mu=a\right\}.$$
\end{Thm}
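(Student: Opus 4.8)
The plan is to prove this by establishing two inequalities: a variational upper bound and a matching lower bound via a saturated-set construction. Write $P := \sup\{h_\mu(T) : \mu \in M(T,X),\ \int \varphi_1 \, d\mu = a\}$ for the target quantity. For the upper bound $\htop(T, R_{\varphi_1}(a) \cap I_{\varphi_2}) \le P$, I would observe that $R_{\varphi_1}(a) \cap I_{\varphi_2} \subseteq R_{\varphi_1}(a)$, so monotonicity of topological entropy on subsets immediately gives $\htop(T, R_{\varphi_1}(a) \cap I_{\varphi_2}) \le \htop(T, R_{\varphi_1}(a)) = P$ by the Takens--Verbitskiy variational principle quoted above. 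This half is essentially free.

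The substance is the lower bound $\htop(T, R_{\varphi_1}(a) \cap I_{\varphi_2}) \ge P$. The natural strategy is to produce, for each $\eta > 0$, an invariant measure (or a sequence of measures) whose entropy is within $\eta$ of $P$ and whose generic-type points can be forced to simultaneously satisfy $\underline{\varphi_1} = \overline{\varphi_1} = a$ \emph{and} $\underline{\varphi_2} < \overline{\varphi_2}$, while building a subset of $R_{\varphi_1}(a) \cap I_{\varphi_2}$ whose entropy approximates the entropy of that measure. First I would pick an ergodic measure $\mu_0$ with $\int \varphi_1 \, d\mu_0 = a$ and $h_{\mu_0}(T) > P - \eta$. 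The hypothesis $R_{\varphi_1}(a) \cap I_{\varphy_2} \neq \emptyset$ guarantees $\varphi_2$ is genuinely nonconstant along the dynamics in a way that lets us find two invariant measures $\nu_1, \nu_2$ with $\int \varphi_2 \, d\nu_1 \ne \int \varphi_2 \, d\nu_2$; the standard move is to keep $\int \varphi_1 \, d\nu_j = a$ as well (or to handle the $\varphi_1$-average in the limit), so that oscillating between the statistical behaviors of $\nu_1$ and $\nu_2$ makes the $\varphi_2$-Birkhoff average oscillate (yielding $I_{\varphi_2}$) while the $\varphi_1$-average still converges to $a$ (keeping the orbit in $R_{\varphi_1}(a)$).

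The engine for the construction is the almost specification property, which lets me glue together long orbit segments that are $(\delta, n)$-shadowed by typical points of the chosen measures. Following the saturated-set / $W$-measure machinery behind Theorem~\ref{thm-inter-reg-irreg} and Thompson's work, I would build a Moran-type Cantor subset by concatenating increasingly long blocks, alternating between blocks that track $\mu_0$-generic behavior (to supply entropy $\approx P$) and blocks that tilt the $\varphi_2$-average up toward $\int \varphi_2 \, d\nu_1$ and then down toward $\int \varphi_2 \, d\nu_2$ (to force $\underline{\varphi_2} < \overline{\varphi_2}$), all while arranging the block lengths so that the running $\varphi_1$-average stays pinned at $a$. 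A standard entropy-distribution principle (a Frostman-type mass lemma on the Cantor set, as in \cite{Takens-Verbitskiy, Thompson2012}) then yields $\htop \ge h_{\mu_0}(T) - \eta > P - 2\eta$, and letting $\eta \to 0$ closes the gap.

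The main obstacle I anticipate is the simultaneous bookkeeping: I must keep the $\varphi_1$-average converging to $a$ with the correct rate while deliberately making the $\varphi_2$-average oscillate, and do so without sacrificing entropy. Concretely, the mismatch parameters of the almost specification property allow a small proportion of ``bad'' coordinates in each glued block, and I have to verify that these error terms—together with the transition blocks inserted when switching between the $\nu_1$- and $\nu_2$-phases—contribute asymptotically negligible distortion to \emph{both} Birkhoff averages and to the entropy estimate. Controlling these errors uniformly, so that the limiting point set lands exactly in $R_{\varphi_1}(a) \cap I_{\varphi_2}$ rather than in some nearby level set, is the delicate part; the rest is an adaptation of the established saturated-set template.
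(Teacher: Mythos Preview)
Your upper bound is fine, though citing Takens--Verbitskiy (a specification result) is slightly off in this almost-specification setting; Bowen's Lemma~\ref{lem-Bowen} is what actually gives $\htop(T,R_{\varphi_1}(a))\le P$ with no specification hypothesis, and the paper argues this way.

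For the lower bound the paper takes a shorter and more modular route than your hand-built Moran set: it invokes Theorem~\ref{thm-var-prin}, so all the almost-specification gluing and entropy-distribution work is encapsulated there. From a point $x\in R_{\varphi_1}(a)\cap I_{\varphi_2}$ one extracts $\mu_1,\mu_2\in V_T(x)$ with $\int\varphi_1\,d\mu_i=a$ but $\int\varphi_2\,d\mu_1\neq\int\varphi_2\,d\mu_2$; one picks an \emph{invariant} (not necessarily ergodic) $\omega$ with $\int\varphi_1\,d\omega=a$ and $h_\omega(T)>P-\eps/2$; and one sets $\omega_i=\theta\omega+(1-\theta)\mu_i$ with $\theta$ near~$1$. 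For $K=\{t\omega_1+(1-t)\omega_2:0\le t\le 1\}$ one checks $G_K\subset R_{\varphi_1}(a)\cap I_{\varphi_2}$, and Theorem~\ref{thm-var-prin} plus affinity of entropy give $\htop(T,G_K)=\min_i h_{\omega_i}(T)\ge\theta h_\omega(T)>P-\eps$.

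The convex-combination step $\omega_i=\theta\omega+(1-\theta)\mu_i$ is precisely the idea that dissolves the obstacle you yourself flag, and your plan as written does not supply it. In your alternating scheme, if the $\nu_j$-blocks ever dominate the running empirical measure---and they must, at some time scales, to tilt the $\varphi_2$-average by a definite amount---then $\nu_j\in V_T(x)$ for every constructed point, and Bowen's Lemma~\ref{lem-Bowen} caps the entropy of your entire set at $h_{\nu_j}(T)$, which may be far below $P$. The fix is to keep the high-entropy measure present with weight at least $\theta$ at \emph{every} time scale, so that every empirical limit has entropy $\ge\theta h_{\mu_0}(T)$; that is the paper's trick read at the orbit level, and without it your construction does not close. (Also drop ``ergodic'' for $\mu_0$: under the constraint $\int\varphi_1\,d\mu=a$ no ergodic measure near the supremum need exist, and none is required.)
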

Now it is time to illustrate our new defined level sets for the irregular points.
\begin{Thm}\label{thm-level-of-irreg}
For any real numbers $c\leq d$, either $X_{\varphi}(c,d)$ is empty or
\begin{displaymath}
\htop\left(T,X_{\varphi}(c,d)\right)=\min_{\xi=c,d}~\sup~\left\{h_{\mu}(T)~|~\mu\in M(T,X)~and~\int\varphi d\mu=\xi\right\}.
\end{displaymath}
\end{Thm}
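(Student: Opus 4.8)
The plan is to prove matching upper and lower bounds. Write $\Lambda(\xi):=\sup\{h_\mu(T):\mu\in M(T,X),\ \int\varphi\,d\mu=\xi\}$, so the target value is $\min(\Lambda(c),\Lambda(d))$; note that $\Lambda$ is concave and upper semicontinuous on its (compact interval) domain because $\mu\mapsto h_\mu(T)$ is affine while $\mu\mapsto\int\varphi\,d\mu$ is linear and continuous. For $x\in X$ set $\mathcal E_n(x):=\frac1n\sum_{i=0}^{n-1}\delta_{T^i x}$ and let $V(x)$ be the set of weak$^{*}$ limit points of $(\mathcal E_n(x))_{n}$; continuity of $\varphi$ gives $\underline\varphi(x)=\min_{\mu\in V(x)}\int\varphi\,d\mu$ and $\overline\varphi(x)=\max_{\mu\in V(x)}\int\varphi\,d\mu$. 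Since $X_\varphi(c,d)\neq\emptyset$, both $c$ and $d$ lie in the range of $\mu\mapsto\int\varphi\,d\mu$, so $\Lambda(c),\Lambda(d)$ are finite and the relevant measure constraints are nonempty.

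For the upper bound I would use the inclusions $X_\varphi(c,d)\subseteq\{x:\underline\varphi(x)=c\}$ and $X_\varphi(c,d)\subseteq\{x:\overline\varphi(x)=d\}$ and show $\htop(T,\{x:\underline\varphi(x)=c\})\le\Lambda(c)$, the bound for $\{\overline\varphi=d\}$ being symmetric; monotonicity then gives $\htop(T,X_\varphi(c,d))\le\min(\Lambda(c),\Lambda(d))$. To bound the first set, fix small $\delta,\epsilon>0$ and put $A_n:=\{y:|\int\varphi\,d\mathcal E_n(y)-c|<\delta\}$. Since $\underline\varphi(x)=c$ forces $\mathcal E_n(x)\in A_n$ for infinitely many $n$, the set is contained in $\bigcup_{n\ge N}A_n$ for every $N$. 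A finite-scale counting estimate in the spirit of the Takens--Verbitskiy argument \cite{Takens-Verbitskiy} bounds the number of $(n,\epsilon)$-separated points inside $A_n$ by $\exp\!\big(n(\sup\{h_\mu(T):|\int\varphi\,d\mu-c|\le\delta\}+\gamma)\big)$ with $\gamma=\gamma(\epsilon)\to0$. Covering each $A_n$ $(n\ge N)$ by the corresponding Bowen balls of order $n$ and summing $e^{-sn}$, the Carath\'eodory sum in Bowen's definition vanishes once $s>\sup\{h_\mu(T):|\int\varphi\,d\mu-c|\le\delta\}+\gamma$; letting $N\to\infty$, then $\epsilon,\delta\to0$, and using upper semicontinuity of $\Lambda$ at $c$, I obtain $\htop(T,\{\underline\varphi=c\})\le\Lambda(c)$.

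For the lower bound I would invoke the almost specification property through the saturated-set entropy formula of Pfister--Sullivan \cite{PS2007}. Choose $\mu_c,\mu_d\in M(T,X)$ with $\int\varphi\,d\mu_c=c$, $\int\varphi\,d\mu_d=d$, and $h_{\mu_c}(T)\ge\Lambda(c)-\eta$, $h_{\mu_d}(T)\ge\Lambda(d)-\eta$, and set $K:=\{t\mu_c+(1-t)\mu_d:t\in[0,1]\}$, a compact connected subset of $M(T,X)$. For the saturated set $G_K:=\{x:V(x)=K\}$, the identity $\int\varphi\,d(t\mu_c+(1-t)\mu_d)=tc+(1-t)d$ shows that $\int\varphi\,d\mu$ sweeps exactly $[c,d]$ as $\mu$ ranges over $K$, so every $x\in G_K$ has $\underline\varphi(x)=c$ and $\overline\varphi(x)=d$; hence $G_K\subseteq X_\varphi(c,d)$. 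The saturated-set formula gives $\htop(T,G_K)=\inf_{\mu\in K}h_\mu(T)$, and affineness of entropy yields $\inf_{t\in[0,1]}\big(t\,h_{\mu_c}(T)+(1-t)h_{\mu_d}(T)\big)=\min(h_{\mu_c}(T),h_{\mu_d}(T))$. Therefore $\htop(T,X_\varphi(c,d))\ge\min(h_{\mu_c}(T),h_{\mu_d}(T))\ge\min(\Lambda(c),\Lambda(d))-\eta$, and $\eta\to0$ completes this half.

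I expect the main obstacle to be the upper bound: making rigorous the variable-scale Bowen-entropy covering along the subsequence of times at which $\mathcal E_n(x)$ is close to the level $c$, together with the finite-scale separated-set count that ties it to $\Lambda(c)$, and then controlling the two limits $\epsilon,\delta\to0$ via the concavity and upper semicontinuity of $\Lambda$ (with the boundary cases $c=\alpha_{\min}$ or $d=\alpha_{\max}$ checked directly). By contrast the lower bound is routine once the saturated-set formula is granted, the only delicate point being to confirm that the segment $K$ attains the prescribed extreme averages exactly at its endpoints, so that $G_K$ lands inside $X_\varphi(c,d)$ rather than in some larger level set.
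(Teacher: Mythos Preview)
Your lower bound follows the paper's outline almost exactly: pick near-optimal $\mu_c,\mu_d$, set $K=\{t\mu_c+(1-t)\mu_d:t\in[0,1]\}$, show $G_K\subset X_\varphi(c,d)$, and appeal to a saturated-set entropy formula. The gap is in the last step. You invoke the Pfister--Sullivan identity $\htop(T,G_K)=\inf_{\mu\in K}h_\mu(T)$ from \cite{PS2007}, but that theorem requires the \emph{uniform separation} property in addition to almost specification, and uniform separation is not assumed here. Bridging this is exactly the technical content of the paper's Theorem~\ref{thm-var-prin}: for the special case where $K$ is the convex hull of finitely many invariant measures, one can manufacture a ``local'' uniform separation on $K$ (Lemmas~\ref{rational-convex-combination}--\ref{lem-N-K}) from almost specification alone, and then run the Pfister--Sullivan machinery. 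Without this, your lower bound does not go through under the paper's hypotheses.

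For the upper bound you propose a Takens--Verbitskiy covering argument at variable scales, and you flag it as the hard part. The paper takes a much shorter route: for every $x\in X_\varphi(c,d)$ there is some $\mu\in V_T(x)$ with $\int\varphi\,d\mu=c$, hence $h_\mu(T)\le\Lambda(c)$, so $X_\varphi(c,d)\subset QR(\Lambda(c))$ in the notation of Bowen's Lemma~\ref{lem-Bowen}, and that lemma immediately gives $\htop(T,X_\varphi(c,d))\le\Lambda(c)$ (and symmetrically $\le\Lambda(d)$). No separated-set count, no $\delta\to0$ limit, and no upper semicontinuity of $\Lambda$ are needed. Your approach could presumably be made to work, but it is substantially more laborious and, as you anticipated, the passage from the finite-scale count to the Bowen entropy of the limsup set would need care; the paper's use of Bowen's lemma sidesteps all of this.
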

Furthermore, we have

\begin{Thm}\label{thm-multifractal}
Let $\{\varphi_i\}_{i=1}^n$ and $\psi$ be continuous functions on $X$. Then for any real numbers $\{a_i\}_{i=1}^n$ and $c\leq d$, either $X_{\psi}(c,d)$ is empty or
\begin{eqnarray*}
   & & \htop\left(T,(\bigcap_{i=1}^nR_{\varphi_{i}}(a_i))\cap X_{\psi}(c,d)\right) \\
   &=& \min_{\xi=c,d}~\sup~\left\{h_{\mu}(T)~|~\mu\in M(T,X)~s.t.~\int\varphi_{i}d\mu=a_i,~i=1,\ldots,n~and~\int\psi d\mu=\xi\right\}.
\end{eqnarray*}

\end{Thm}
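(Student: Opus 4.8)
The plan is to establish the formula by two matching inequalities, reducing the multiple-constraint case to the same machinery that underlies Theorems~\ref{thm-inter-level-irreg} and~\ref{thm-level-of-irreg}. Write
\[
E:=\Bigl(\bigcap_{i=1}^n R_{\varphi_i}(a_i)\Bigr)\cap X_\psi(c,d),\qquad
M_\xi:=\Bigl\{\mu\in M(T,X):\textstyle\int\varphi_i\,d\mu=a_i\ (1\le i\le n),\ \int\psi\,d\mu=\xi\Bigr\},
\]
and $S(\xi):=\sup_{\mu\in M_\xi}h_\mu(T)$, so the claim is $\htop(T,E)=\min\{S(c),S(d)\}$. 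I assume $E\neq\emptyset$ (equivalently $M_c\neq\emptyset\neq M_d$): if $x\in E$ then $\int\varphi_i\,d\En(x)\to a_i$ for each $i$, while $\underline\psi(x)=c$ and $\overline\psi(x)=d$ furnish subsequences of $(\En(x))$ accumulating weak$^*$ on $M_c$ and $M_d$ respectively; the converse is the lower-bound construction below. The only structural remark is that $\htop(T,E)\le\min\{S(c),S(d)\}$ follows once we show $\htop(T,E)\le S(c)$ and $\htop(T,E)\le S(d)$ separately.

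\emph{Lower bound.} Given $\eps>0$, choose $\mu_c\in M_c$ and $\mu_d\in M_d$ with $h_{\mu_c}(T)>S(c)-\eps$ and $h_{\mu_d}(T)>S(d)-\eps$, and set $K:=\{t\mu_c+(1-t)\mu_d:t\in[0,1]\}$, a compact connected subset of $M(T,X)$. Since the constraints $\int\varphi_i\,d\mu=a_i$ are affine they persist along $K$, while $\int\psi\,d(t\mu_c+(1-t)\mu_d)=tc+(1-t)d$ sweeps $[c,d]$ with extrema at the endpoints. Hence every $x$ in the saturated set $G_K:=\{x:\text{the limit-point set of }(\En(x))_n\text{ equals }K\}$ satisfies $\int\varphi_i\,d\En(x)\to a_i$, $\underline\psi(x)=c$, $\overline\psi(x)=d$, so $G_K\subseteq E$ (in particular $G_K\neq\emptyset$, which settles the converse above). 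The saturated-set variational principle available under almost specification, exactly as used for Theorem~\ref{thm-level-of-irreg}, gives $\htop(T,G_K)=\inf_{\mu\in K}h_\mu(T)$; as $\mu\mapsto h_\mu(T)$ is affine, the infimum over the segment equals $\min\{h_{\mu_c}(T),h_{\mu_d}(T)\}>\min\{S(c),S(d)\}-\eps$. Letting $\eps\to0$ gives $\htop(T,E)\ge\min\{S(c),S(d)\}$.

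\emph{Upper bound.} Fix $\xi\in\{c,d\}$. For $x\in E$, the condition $\underline\psi(x)=c$ (resp.\ $\overline\psi(x)=d$) together with $\int\varphi_i\,d\En(x)\to a_i$ produces arbitrarily large times $m$ at which $\E_m(x)$ lies within $\delta$ of the compact set $M_\xi$. I would then run the covering argument behind the Pesin--Pitskel definition of $\htop$: cover each such $x$, at one of its good times $m$, by a Bowen $(m,\eps)$-ball, and control the total weight through the localized estimate
\[
N\bigl(\{y:d(\E_m(y),M_\xi)<\delta\};\,m,\eps\bigr)\ \le\ e^{m\,(S(\xi)+\gamma(\delta,\eps))},\qquad \gamma(\delta,\eps)\xrightarrow[\delta,\eps\to0]{}0,
\]
whose right-hand side is summable against $e^{-(S(\xi)+\gamma)m}$ precisely because each ball may carry its own length $m$. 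This yields $\htop(T,E)\le S(\xi)+\gamma$; letting $\delta,\eps\to0$ and taking $\xi=c,d$ gives $\htop(T,E)\le\min\{S(c),S(d)\}$.

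\emph{Main obstacle.} The crux is the displayed counting estimate. A priori every limit measure of $(\En(x))$ for $x\in E$ is only known to lie in the slab $\{\mu:\int\varphi_i\,d\mu=a_i,\ c\le\int\psi\,d\mu\le d\}$, whose entropy supremum may strictly exceed $\min\{S(c),S(d)\}$; the essential gain is that evaluating the cover only at the times where $\E_m(x)$ is near $M_c$ (or near $M_d$) confines the count to a neighborhood of $M_\xi$. Making $e^{m(S(\xi)+\gamma)}$ genuine, so that the $\delta$-fattening of $M_\xi$ contributes no extra entropy as $\delta\to0$, is the delicate point: it is the localized upper half of the variational principle and rests on the regularity of $\mu\mapsto h_\mu(T)$ (upper semi-continuity, as in the expansive setting of Takens--Verbitskiy and present for the $\beta$-shift and subshift examples), exactly as in the proof of Theorem~\ref{thm-level-of-irreg}. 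By comparison the lower bound is routine once the saturated-set machinery and the affineness of entropy along $K$ are in hand.
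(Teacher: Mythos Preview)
Your lower bound is exactly the paper's argument: pick high-entropy $\nu_1\in M_c$, $\nu_2\in M_d$, take the segment $K=\{t\nu_1+(1-t)\nu_2:t\in[0,1]\}$, invoke Theorem~\ref{thm-var-prin} (the saturated-set principle for a finite convex hull under almost specification), and check $G_K\subset E$ via affineness of the $\varphi_i$-constraints and the fact that $\int\psi\,d(\cdot)$ attains its extremes at the endpoints.

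Your upper bound, however, diverges from the paper and manufactures an obstacle that is not there. The paper's upper bound is a one-line application of Bowen's lemma (Lemma~\ref{lem-Bowen}): for every $x\in E$ there is $\mu\in V_T(x)$ with $\int\varphi_i\,d\mu=a_i$ and $\int\psi\,d\mu=c$, hence $h_\mu(T)\le S(c)$ and $x\in QR(S(c))$; thus $E\subset QR(S(c))$ and $\htop(T,E)\le S(c)$. Symmetrically $\htop(T,E)\le S(d)$. No covering argument, no $\delta$-fattening of $M_\xi$, and in particular no upper semi-continuity of $\mu\mapsto h_\mu(T)$ is used.

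This matters because the ``delicate point'' you flag---controlling the neighborhood count by $e^{m(S(\xi)+\gamma)}$ with $\gamma\to 0$---genuinely needs upper semi-continuity of entropy, which the paper does not assume and which is \emph{not} implied by the almost specification property (nor even by full specification without expansiveness). So your route, as written, does not close in the stated generality, whereas Bowen's lemma does. Your remark that this is ``exactly as in the proof of Theorem~\ref{thm-level-of-irreg}'' is also off: the upper bound there is likewise obtained from Lemma~\ref{lem-Bowen}, not from a localized counting estimate.
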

The proof relies heavily on the following theorem which we now state.

\subsection{Variational principle for the finite convex combination of invariant measures}
In \cite{PS2007}, Pfister and Sullivan used the almost specification property and the uniform separation property to give a variational principle for the saturated sets. However, we observe that for some special $K$s, the uniform separation condition naturally holds.
\begin{Thm}\label{thm-var-prin}
If $K=\{\sum_{i=1}^{n}t_{i}\lambda_{i}:t_{i}\in[0,1],\sum_{i=1}^{n}t_{i}=1,i=1,\cdots,n\}$
for some finite subset $\{\lambda_i\}_{i=1}^n\subset M(T,X)$, then
$$\htop(T,G_K)=\min\{h_{\lambda_i}(T),i=1,\ldots,n\}.$$
\end{Thm}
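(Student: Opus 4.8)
The plan is to deduce the identity from the Pfister--Sullivan variational principle for saturated sets \cite{PS2007}, after two reductions: converting the infimum of entropies over $K$ into the minimum over the extreme measures, and showing that the separated-set estimates underlying the lower bound are available here \emph{without} invoking the uniform separation property, precisely because $K$ has only finitely many extreme points. First note that $K$, being the convex hull of finitely many points of the compact convex metrizable space $M(T,X)$, is compact and convex, hence connected, so $G_K$ is a genuine saturated set. Since the entropy map $\mu\mapsto h_\mu(T)$ is affine on $M(T,X)$ \cite{Walters}, every $\mu=\sum_i t_i\lambda_i\in K$ satisfies $h_\mu(T)=\sum_i t_i h_{\lambda_i}(T)$, a convex combination of the numbers $h_{\lambda_i}(T)$; therefore $\inf_{\mu\in K}h_\mu(T)=\min_i h_{\lambda_i}(T)$, attained at an extreme point. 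It thus suffices to prove $\htop(T,G_K)=\inf_{\mu\in K}h_\mu(T)$.

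For the upper bound, fix $\mu\in K$. For every $x\in G_K$ we have $\mu\in V(x)=K$, so the empirical measures $\mathcal{E}_n(x)$ enter each weak$^{*}$ neighborhood of $\mu$ for infinitely many $n$; thus $G_K\subseteq\{x:\mu\in V(x)\}$. A standard Carath\'eodory-cover estimate (the easy half of the entropy--separated-set characterization, valid for every invariant measure and requiring no extra hypothesis) gives $\htop\bigl(T,\{x:\mu\in V(x)\}\bigr)\le h_\mu(T)$: one covers the set by Bowen balls $B_n(x,\epsilon)$ taken along times $n$ for which $\mathcal{E}_n(x)$ is close to $\mu$, and the number of such $(n,\epsilon)$-separated balls is at most $e^{n(h_\mu(T)+\eta)}$, so the relevant Carath\'eodory sum converges for $s>h_\mu(T)+\eta$. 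Taking the infimum over $\mu\in K$ yields $\htop(T,G_K)\le\inf_{\mu\in K}h_\mu(T)=\min_i h_{\lambda_i}(T)$.

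The main work is the lower bound. Fix $\eta>0$ and choose a sequence $(\mu_k)\subset K$ that is dense in $K$, visits each $\lambda_i$ infinitely often, and has consecutive terms weak$^{*}$-close (possible since $K$ is compact and connected). For each \emph{fixed} extreme measure $\lambda_i$ --- and there are only finitely many --- the single-measure entropy-via-separated-sets formula (Katok's formula, extended through the ergodic decomposition; crucially, no uniformity across measures is required) provides, for all large lengths $\ell$, an $(\ell,\epsilon)$-separated family of orbit segments whose empirical measures lie within any prescribed neighborhood of $\lambda_i$ and whose cardinality is at least $e^{\ell(h_{\lambda_i}(T)-\eta)}$. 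To approximate $\mu_k=\sum_i t_i^{(k)}\lambda_i$ over a block of length $n_k$, I partition $n_k$ into sub-blocks of lengths $\approx t_i^{(k)}n_k$, fill the $i$-th sub-block with an element of the $\lambda_i$-separated family, and glue the sub-blocks using the almost specification property. By affinity the resulting concatenations have empirical measure close to $\mu_k$ and form an $(n_k,\epsilon')$-separated set of cardinality $\approx\prod_i e^{t_i^{(k)}n_k(h_{\lambda_i}(T)-\eta)}=e^{n_k(h_{\mu_k}(T)-\eta)}$ --- exactly the estimate that uniform separation would otherwise supply, obtained here for free from the finiteness of the extreme set. Concatenating the successive stages, again via almost specification with transition gaps of vanishing density, produces a Cantor set $F\subseteq X$ for which, by the density and recurrence of $(\mu_k)$, one verifies $V(x)=K$, so $F\subseteq G_K$. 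Distributing mass evenly across the branches and applying the generalized mass distribution principle of \cite{PS2007} gives $\htop(T,F)\ge\inf_{\mu\in K}h_\mu(T)-\eta=\min_i h_{\lambda_i}(T)-\eta$, the infimum arising because the low-entropy measures are revisited infinitely often and drag the liminf of the local entropy rates down to $\min_i h_{\lambda_i}(T)$. Letting $\eta\to0$ completes the lower bound, and together with the upper bound the theorem follows.

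The hard part will be the lower bound, and within it the construction of the stage-$k$ separated sets of the correct exponential size while still forcing $V(x)=K$. Three points need care: (i) replacing the uniform separation hypothesis by the block-concatenation argument, which is legitimate only because $K$ has finitely many extreme points, so that separated families must be produced for finitely many \emph{fixed} measures rather than uniformly over a continuum of measures; (ii) controlling the densities of the almost-specification transition gaps so that the empirical measures are not perturbed out of $K$ and so that separation survives the gluing; and (iii) the entropy bookkeeping showing that the liminf of the local rates is exactly $\min_i h_{\lambda_i}(T)$ and not smaller.
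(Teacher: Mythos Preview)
Your proposal is correct and follows essentially the same route as the paper, with the same central insight: because $K$ has only finitely many extreme measures $\lambda_1,\dots,\lambda_n$, one can fix separation parameters that work simultaneously for all of them, and then manufacture separated sets near any $\mu\in K$ by concatenating, via almost specification, orbit pieces drawn from the separated families at the $\lambda_i$ in the proportions dictated by the convex weights. The paper organises this slightly differently: rather than carrying out the full Cantor-set/mass-distribution construction inline, it isolates the concatenation step as a sequence of lemmas showing that $K$ satisfies a \emph{local} uniform separation condition (first for the $\lambda_i$, then for rational convex combinations, then for all of $K$ by density and affinity), and then invokes the Pfister--Sullivan machinery of \cite{PS2007} as a black box, observing that their proof only ever uses uniform separation restricted to the set $K$ under consideration. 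Your version is more explicit; theirs is more modular.

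One technical point to tighten: for the gluing under \emph{almost} specification you need the input families at each $\lambda_i$ to be $(\delta,n,\eps)$-separated in the Pfister--Sullivan sense (separated on a positive-density set of times), not merely $(n,\eps)$-separated in the ordinary Bowen sense; otherwise the mistake function can destroy separation after concatenation. Katok's formula alone gives only the latter. The paper obtains the former from \cite[Proposition~6.1]{PS2007}, which under almost specification yields $s(\lambda_i)=h_{\lambda_i}(T)$ and hence $(\delta^*,n,\eps^*)$-separated sets of the right cardinality for each $\lambda_i$; you should appeal to the same result in place of Katok's formula in your step (i).
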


This paper is organized as follows. In section \ref{pre}, we give the basic knowledge. After that, we give the proof of the technical theorem \ref{thm-var-prin} in section \ref{technical-thm}. Then we take use of this result to deduce our main theorem \ref{thm-inter-reg-irreg} through \ref{thm-multifractal} in section \ref{proof-for-entropy}. Furthermore, we point out that the conclusions also hold for topological pressure in section \ref{about-pressure}. Finally, we consider a continuous case in section \ref{suspension-flow}, namely, we apply our results to the suspension flow.



\section{Preliminaries}\label{pre}
\subsection{Some notions}
Throughout this paper, we denote by $C(X)$ the set of continuous maps on $X$ and $M(X)$ the set of Borel probability measures on $X$. We say $\mu\in M(X)$ is a $T$-invariant measure if for any Borel measurable set $A$, one has $\mu(A)=\mu(T^{-1}A)$. The set of $T$-invariant measures are denoted by $M(T,X)$. We write $h_{\mu}(T)$ for the metric entropy of $\mu\in M(T,X).$

Define the empirical measure of $x\in X$ as
\begin{equation*}
 \mathcal{E}_{n}(x):=\frac{1}{n}\sum_{j=0}^{n-1}\delta_{f^{j}(x)},
\end{equation*}
where $\delta_{x}$ is the Dirac mass at $x$. The limit-point set of $\{\mathcal{E}_{n}(x)\}$ is always a non-empty compact connected subset $V_{T}(x)\subset M(T,X)$. A subset $D\subset X$ is called saturated if for any $x\in D$, one has
$$\{y\in X: V_T(y)=V_T(x)\}\subset D.$$
Of particular interest are the points $x\in X$ such that $V_{T}(x)=\{\mu\}$. These points are called generic points of $\mu$ and we denote them by $G_{\mu}$. More generally, if $K\subset M(T,X)$ is a non-empty compact connected subset, we denote
by $G_{K}$ the saturated set of points $x$ such that $V_{T}(x)=K$.

If ${\{\varphi_{j}\}}_{j\in\mathbb{N}}$ is a dense subset of $C(X)$, then
$$\rho(\nu,\tau)=\sum_{j=1}^{\infty}\frac{|\int\varphi_{j}d\nu-\int\varphi_{j}d\tau|}{2^{j}\|\varphi_{j}\|}$$
defines a metric on $M(X)$ for the $weak^{*}$ topology  \cite{Walters}. Note that
$$\rho(\nu,\tau)\leq 2,~\forall~\nu,\tau\in M(X).$$
In the rest of this article, we follow Pfister and Sullivan \cite{PS2007} and  use an equivalent metric on $X$, namely,
$d(x,y):=\rho(\delta_{x},\delta_{y}).$
We denote a ball in $M(X)$ by
$$\mathcal{B}(\nu,r):=\{\rho(\nu,\mu)<r:\mu\in M(X)\}.$$
For $\delta>0$ and $\eps>0$, two points $x$ and $y$ are $(\delta,n,\eps)$-separated if
$$|\{j:d(T^{j}(x),T^{j}(y))>\eps,~0\leq j\leq n-1\}|\geq\delta n.$$
Where $|B|$ denotes the cardinality of $B$. A subset $E$ is $(\delta,n,\eps)$-separated if any pair of different points of $E$ are $(\delta,n,\eps)$-separated.
Let $F\subset M(T,X)$ be a neighborhood. We define
$X_{n,F}:=\{x\in X:\mathcal{E}_{n}(x)\in F\}$
and
$$N(F;\delta,n,\eps):=~\textrm{maximal cardinality of a} ~(\delta,n,\eps)-\textrm{separated subset of}~ X_{n,F}.$$

\subsection{The specification property}
A continuous map $T:X\rightarrow X$ satisfies the \emph{specification} property if for all $\eps>0$,  there exists an integer $m(\eps)$ such that for any collection $\{I_{j}=[a_{j},b_{j}]\subset\mathbb{Z}^{+}:j=1,\cdots,k\}$ of finite intervals with $a_{j+1}-b_{j}\geq m(\eps)$ for $j=1,\cdots,k-1$ and any $x_{1},\cdots,x_{k}$ in $X$, there exists a point $x\in X$ such that
$$d(T^{a_{j}+t}x,T^{t}x_{j})<\eps $$ 
for all $t=0,\cdots,b_{j}-a_{j}$ and $j=1,\cdots,k$. Furthermore, we say $f$ satisfies \emph{Bowen's specification} property if the point $x$ can be chosen to be periodic with period $p$ for every $p\geq b_{k}-a_{1}+m(\epsilon)$.

\subsection{The almost specification property}
Let $\eps_0>0$. A function $g:\mathbb{N}\times (0,\eps_0)\to \mathbb{N}$ is called a \emph{mistake function} if for all $\eps\in (0,\varepsilon_0)$ and all $n\in \mathbb{N}$, $g(n,\eps)\leq g(n+1,\eps)$ and
$$\lim_{n}\frac{g(n,\eps)}{n}=0.$$
If $\eps\geq \eps_0$, we define $g(n,\eps)=g(n,\eps_0)$.
For $n\in \mathbb{N}$ large enough such that $g(n,\eps)<n$, we define
$$I(g;n,\eps):=\{\Lambda\subset\{0,1,\cdots,n-1\}:|\Lambda|\geq n-g(n,\eps)\}.$$
For a finite set of indices $\Lambda\subset\{0,1,\cdots,n-1\}$, we define the \emph{Bowen distance} of $x,y\in X$ along $\Lambda$ by
$$d_{\Lambda}(x,y):=\max_{j\in \Lambda}\{d(T^jx,T^jy)\}$$
and the \emph{Bowen ball} of radius $\eps$ centered at $x$ by
$$B_{\Lambda}(x,\eps):=\{y\in X:d_{\Lambda}(x,y)<\eps\}.$$
When $g(n,\eps)<n$, we define the $(g;n,\eps)$-Bowen ball centered at $x$ as
$$B_n(g;x,\eps):=\{y\in X:y\in B_{\Lambda}(x,\eps)~\textrm{for some}~\Lambda\in I(g;n,\eps)\}=\bigcup_{\Lambda\in I(g;n,\eps)}B_{\Lambda}(x,\eps).$$

The dynamical system $(X,T)$ has \emph{the almost specification} property with mistake function $g$, if there exists a function $k_g:(0,+\infty)\to \N$ such that for any
$\eps_{1}>0,\cdots,\eps_{m}>0$, any points $x_{1},\cdots,x_{m}\in X$, and any integers $n_{1}\geq k_g(\eps_{1}),\cdots,n_{m}\geq k_g(\eps_{m})$, we can find a point $z\in X$ such that
\begin{equation*}
  T^{l_{j}}(z)\in B_{n_{j}}(g;x_{j},\eps_{j}),~j=1,\cdots,m,
\end{equation*}
where $n_{0}=0~\textrm{and}~l_{j}=\sum_{s=0}^{j-1}n_{s}$.

\subsection{Bowen's entropy for non-compact sets}\label{entropy-for-maps}
This is due to Bowen \cite{Bowen} and we state it in a more convenient form. For any $x\in X$ and $\eps>0$, define
$$B_n(T,x,\eps) = \{y\in X:d(T^ix,T^iy)<\eps,i=0,\ldots,n-1\},n\in\N.$$
Let $Z\subset X$ be an arbitrary Borel set, not necessarily compact or invariant.
Let $s\in \R$ and
\begin{eqnarray*}
  P(Z,s,\Lambda) &=& \sum_{B_{n_i}(T,x_i,\eps)\in\Lambda}\exp\left(-sn_i\right), \\
  C(Z,s,\eps,N) &=& \inf_{\Lambda}Q(Z,s,\Gamma),
\end{eqnarray*}
where the infimum is taken over all finite or countable collections of the form $\Lambda=\{B_{n_i}(T,x_i,\eps)\}_i$, with $x_i\in X$ such that $\Lambda$ covers $Z$ and $n_i\geq N$ for all $i\geq1$.

Since $C(Z;s,\eps,N)$ is non-decreasing in $N$, we define
$$C(Z;s,\eps):=\lim_{N\rightarrow\infty}C(Z;s,\eps,N).$$
Then we define
$$\htop(T,Z;\eps) := \inf\{s:C(Z;s,\eps)=0\}=\sup\{s:C(Z;s,\eps)=\infty\}.$$
Finally, the \emph{topological entropy} of $Z$ with respect to $T$ is
$$\htop(T,Z):=\lim_{\eps\rightarrow0} \htop(T,Z;\eps).$$

\section{Variational principle for a finite convex  combination of invariant measures}\label{technical-thm}
First, let us point out an important fact.
\begin{Lem}\label{rational-convex-combination}
For any $\eta>0$, there exist $\delta^*>0$ and $\eps^*>0$ so that for any $\lambda_i$ and any neighborhood $F_i\subset M(X)$ of $\lambda_i$, there exists $n^*_{F_i,\lambda_i,\eta}$ such that
\begin{equation}\label{N-lambda-i}
  N(F_i;\delta^*,n,\eps^*)\geq 2^{n(h_{\lambda_i}(T)-\eta)}~\textrm{if}~n\geq n^*_{F_i,\lambda_i,\eta}.
\end{equation}
\end{Lem}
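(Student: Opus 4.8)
The plan is to reduce the statement to a single-measure lower bound and then manufacture the uniform constants $\delta^*,\eps^*$ by a finite minimisation. Concretely, I first fix $\eta>0$ and aim to show that for each invariant measure $\lambda$ there are $\delta_\lambda>0$ and $\eps_\lambda>0$, depending on $\lambda$ and $\eta$, such that for \emph{every} weak$^*$ neighbourhood $F$ of $\lambda$ one has $N(F;\delta_\lambda,n,\eps_\lambda)\geq 2^{n(h_\lambda(T)-\eta)}$ for all sufficiently large $n$ (with the threshold allowed to depend on $F$). Since the lemma is applied to the \emph{finite} family $\{\lambda_i\}$ coming from Theorem \ref{thm-var-prin}, I can then set $\delta^*:=\min_i\delta_{\lambda_i}$ and $\eps^*:=\min_i\eps_{\lambda_i}$. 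Because $N(F;\delta,n,\eps)$ is non-increasing in both $\delta$ and $\eps$ (decreasing either variable makes the condition $|\{j:d(T^jx,T^jy)>\eps\}|\geq\delta n$ easier to meet, hence enlarges the admissible separated sets), the common constants still witness the bound for every $\lambda_i$, giving the stated uniformity and the threshold $n^*_{F_i,\lambda_i,\eta}$.

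For the single-measure bound I would treat the ergodic case first, since it contains the main idea. If $\lambda$ is ergodic, Birkhoff's theorem gives $\mathcal{E}_n(x)\to\lambda$ in the weak$^*$ topology for $\lambda$-a.e.\ $x$; hence for any neighbourhood $F$ of $\lambda$ the set $X_{n,F}=\{x:\mathcal{E}_n(x)\in F\}$ satisfies $\lambda(X_{n,F})>\tfrac12$ once $n$ is large. I then extract cardinality from the Katok entropy formula (equivalently the Shannon--McMillan--Breiman theorem applied to a fine partition $\xi$ with $\lambda(\partial\xi)=0$ and $h_\lambda(T,\xi)>h_\lambda(T)-\eta/4$, which fixes $\eps_\lambda$ and is where the dependence on $\lambda$ enters): inside any set of $\lambda$-measure exceeding $\tfrac12$ there live at least $2^{n(h_\lambda(T)-\eta/2)}$ points that are distinguishable at scale $\eps_\lambda$. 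Intersecting with $X_{n,F}$ keeps all but a negligible fraction, so I obtain that many such points with empirical measure in $F$.

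Passing from ordinary distinguishability to genuine $(\delta,n,\eps)$-separation is the first technical point. The idea is a greedy selection governed by a Hamming-type count: among the chosen representatives, orbits whose itineraries (or, via a standard two-scale comparison of the partition with the metric) disagree on at least $\delta n$ of the first $n$ times are $>\eps_\lambda$ apart on at least $\delta n$ indices, hence $(\delta,n,\eps_\lambda)$-separated. Deleting, around each retained point, only the ``ball'' of orbits agreeing on more than a $(1-\delta)$-fraction of times costs a factor at most $2^{n\,c(\delta)}$ with $c(\delta)\to0$ as $\delta\to0$; choosing $\delta_\lambda$ so small that $c(\delta_\lambda)<\eta/2$ leaves at least $2^{n(h_\lambda(T)-\eta)}$ pairwise $(\delta_\lambda,n,\eps_\lambda)$-separated points inside $X_{n,F}$, which is exactly the single-measure estimate.

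The hard part will be the non-ergodic case, because for a non-ergodic $\lambda$ Birkhoff's theorem no longer localises typical empirical measures at $\lambda$ (they converge instead to the various ergodic components), so the count above produces abundant separated orbits whose empirical measures lie away from $\lambda$ and hence outside a small $F$. To repair this I would approximate $\lambda$ by a finite convex combination $\sum_k p_k\mu_k$ of ergodic measures with $\sum_k p_k h_{\mu_k}(T)$ close to $h_\lambda(T)$ (using affinity and upper semicontinuity of the entropy), take for each $k$ an exponentially large $(\delta,n_k,\eps)$-separated family of $\mu_k$-typical blocks as above, and then concatenate one block from each component in proportions $p_k$ by means of the almost specification property; the glued orbits have empirical measure within $F$, and counting the independent block choices recovers the factor $2^{n(h_\lambda(T)-\eta)}$ while the gluing preserves fractional separation on the overwhelming majority of coordinates. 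Verifying simultaneously that this construction keeps the empirical measures inside the prescribed neighbourhood and keeps an exponentially large fraction-separated family is the main obstacle, and is where the almost specification property (or, alternatively, the general non-ergodic localisation result of Pfister and Sullivan \cite{PS2007}) does the real work.
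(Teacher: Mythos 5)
Your overall skeleton --- reduce to a single-measure lower bound in which the constants $\delta_\lambda,\eps_\lambda$ are uniform over \emph{all} neighbourhoods $F$ of the fixed measure, then set $\delta^*=\min_i\delta_{\lambda_i}$ and $\eps^*=\min_i\eps_{\lambda_i}$ using that $N(F;\delta,n,\eps)$ is non-increasing in $\delta$ and $\eps$ --- is exactly the paper's, and your ergodic-case argument (Birkhoff to get $\lambda(X_{n,F})>\frac12$, SMB/Katok counting at a scale adapted to $\lambda$, Hamming-ball pruning at cost $2^{nc(\delta)}$) is essentially the standard proof of the ergodic case in \cite{PS2007}. The paper itself does none of this work: its proof is a one-line citation of \cite[Proposition 6.1]{PS2007}, which asserts that under almost specification $s(\nu)=h_{\nu}(T)$ for \emph{every} invariant $\nu$, where in the definition of $s(\nu)$ the infimum over neighbourhoods $F\ni\nu$ sits \emph{inside} the limits in $\delta$ and $\eps$; that quantifier order is precisely the uniformity in $F$ that the lemma asserts, and it is then inherited by the finite family via the minimum.

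The genuine gap is in your non-ergodic case, and it is a quantifier problem rather than a technicality. The glued orbits you construct have empirical measures clustering near the chosen finite combination $\sum_k p_k\mu_k$, so to land in an arbitrarily small neighbourhood $F$ of $\lambda$ you must let the combination --- hence its ergodic components $\mu_k$ --- vary with $F$. But your separation constants come from the ergodic-case argument applied to those components: the scale $\eps$ is tied to a partition adapted to each $\mu_k$, and in general there is no single scale at which entropy is captured uniformly over all ergodic measures (this failure is exactly why Pfister and Sullivan isolate ``uniform separation'' as a separate hypothesis in \cite{PS2007}). Consequently, as $F$ shrinks your construction only yields $N(F;\delta_F,n,\eps_F)\geq 2^{n(h_{\lambda}(T)-\eta)}$ with $\delta_F,\eps_F$ possibly degenerating, which is strictly weaker than \eqref{N-lambda-i} and useless downstream: in the proof of lemma \ref{lem-rational-covex} the neighbourhoods $F_i=\mathcal{B}(\lambda_i,\frac{r}{2})$ shrink as $r\to 0$ while $\widetilde{\delta}^*,\widetilde{\eps}^*$ must stay fixed. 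Note also that in this paper the lemma is applied to measures $\lambda_i$ that are typically \emph{not} ergodic (e.g.\ $\omega_i=\theta\omega+(1-\theta)\mu_i$ in section 4), so the ergodic case alone does not suffice. Your parenthetical fallback --- invoking the non-ergodic localisation result of \cite{PS2007} --- does close the gap, but then it \emph{is} the whole proof (indeed, it is the paper's proof), and the preceding construction becomes redundant.
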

\begin{proof}
Since $(X,f)$ has the almost specification property, we have by \cite[Proposition 6.1]{PS2007} that
$$s(\nu)=h(T,\nu)~\textrm{if}~\nu\in M(T,X),$$
where $s(\nu)=\lim\limits_{\eps\to 0}\lim\limits_{\delta\to 0}\inf\limits_{F\ni\mu}\lim\limits_{n\to\infty}\frac{1}{n}\log_2N(F;\delta,n,\eps)$.
Therefore, for any $\eta>0$, there exist $\delta^*_i>0$ and $\eps^*_i>0$ so that for any $\lambda_i$ and any neighborhood $F_i\subset M(X)$ of $\lambda_i$, there exists $n^*_{F_i,\lambda_i,\eta}$ such that
$$
  N(F_i;\delta^*_i,n,\eps^*_i)\geq 2^{n(h_{\lambda_i}(T)-\eta)}~\textrm{if}~n\geq n^*_{F_i,\lambda_i,\eta}.
$$

On the other hand, one observes that $N(F;\delta,n,\eps)$ is non-increasing in $\delta$ and $\eps$, so if we choose
$$\delta^*=\min\{\delta^*_i:i=1,\ldots,n\}>0~\textrm{and}~\eps^*=\min\{\eps^*_i:i=1,\ldots,n\}>0,$$
then one sees that \eqref{N-lambda-i} holds.
\end{proof}
Based on this observation, we show further that the separation is uniform on the finite rational convex combination of the invariant measures.
\begin{Lem}\label{lem-rational-covex}
For any $\eta>0$, there exist $\delta^*>0$ and $\eps^*>0$ so that if $\lambda$ is a rational convex combination of $\{\lambda_i\}_{i=1}^n$, i.e. there exist $\{c_i\}_{i=1}^n\subset\N$  such that $\lambda=\sum_{i=1}^n\frac{c_i}{c}\lambda_i$ with $c=\sum_{i=1}^nc_i$, then for any neighborhood $F\subset M(X)\cap K$ of $\lambda$, there exists $n^*_{F,\lambda,\eta}$ such that
\begin{equation}\label{N-lambda}
  N(F;\delta^*,n,\eps^*)\geq 2^{n(h_{\lambda}(T)-\eta)}~\text{if}~n\geq n^*_{F,\lambda,\eta}.
\end{equation}

\end{Lem}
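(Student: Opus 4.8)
The plan is, for each large $n$, to construct a $(\delta^{**},n,\eps^{**})$-separated subset of $X_{n,F}$ of cardinality at least $2^{n(h_\lambda(T)-\eta)}$, where $\delta^{**},\eps^{**}$ depend only on $\eta$ and the fixed data $\delta^*,\eps^*,\{\lambda_i\}$ of Lemma~\ref{rational-convex-combination}, never on the particular combination $\lambda$; these $\delta^{**},\eps^{**}$ are the constants asserted by the statement (renamed $\delta^*,\eps^*$ there). Two inputs drive the argument: the affinity of metric entropy on $M(T,X)$, giving $h_\lambda(T)=\sum_i\frac{c_i}{c}h_{\lambda_i}(T)$, and Lemma~\ref{rational-convex-combination}, which furnishes one pair $\delta^*,\eps^*$ such that every $\lambda_i$ admits, for each neighborhood $F_i$ and all $m$ large, a $(\delta^*,m,\eps^*)$-separated set $E_i\subset X_{m,F_i}$ with $|E_i|\ge 2^{m(h_{\lambda_i}(T)-\eta)}$.

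I would fix a large common block length $m$ and split $[0,n)$ into $C=\lfloor n/m\rfloor$ blocks of length $m$, declaring $C_i:=\mathrm{round}(\tfrac{c_i}{c}C)$ of them to have ``type $i$'', so that $C_i/C\to c_i/c$. Assigning to each block a point of the corresponding $E_i$ and gluing the $C$ segments by the almost specification property, tracking each chosen point along its block with error $\eps^*/4$, produces a point $z\in X$. Since the mistake sets have size $o(m)$, for $m$ large the empirical measure of $z$ over each block stays within $F_i$ of $\lambda_i$, whence its global empirical measure stays within $F$ of $\lambda$ once the $F_i$ are small and $C$ large; thus $z\in X_{n,F}$. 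Independent choices in the blocks give $\prod_i|E_i|^{C_i}\ge 2^{n(h_\lambda(T)-\eta)-O(m)}$ choice vectors, the $O(m)$ rounding error being negligible as $n\to\infty$.

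The crux is uniform separation. If two choice vectors differ in a single block, the glued points agree except on about $\delta^* m$ coordinates, a fraction $\delta^*/C$ that collapses as the denominator of $\lambda$ grows, so the full product family is not separated with a $\lambda$-independent constant. I would overcome this by thinning to a subfamily $\mathcal C$ whose vectors differ pairwise in at least $\gamma C$ blocks, extracted greedily (Gilbert--Varshamov): such a $\mathcal C$ exists of size at least $\prod_i|E_i|^{C_i}$ divided by the Hamming-ball volume $\sum_{r<\gamma C}\binom{C}{r}(\max_i|E_i|)^r$. Setting the uniform value $\gamma:=\eta/(2h_{\max})$ with $h_{\max}:=\max_i h_{\lambda_i}(T)$, this volume is at most $2^{CH(\gamma)+\gamma Cm\,h_{\max}+o(C)}=2^{CH(\gamma)+Cm\eta/2+o(C)}$ (here $H$ is the binary entropy function); dividing its exponent by $n=Cm$ gives $H(\gamma)/m+\eta/2+o(1)$, which for $m$ large---uniformly in $\lambda$, since $\gamma$ and $h_{\max}$ are fixed---is below $\eta$. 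Hence $|\mathcal C|\ge 2^{n(h_\lambda(T)-2\eta)}$.

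Finally I would read off the separation. Two distinct vectors of $\mathcal C$ differ in at least $\gamma C$ blocks; in each the assigned points are $(\delta^*,m,\eps^*)$-separated, so deleting the two mistake sets leaves at least $\delta^* m-2g(m,\eps^*/4)\ge\tfrac{\delta^*}{2}m$ coordinates where they differ by more than $\eps^*$, hence---by the $\eps^*/4$ tracking on each side---where the two glued points differ by more than $\eps^*/2$. Summing over the $\ge\gamma C$ differing blocks yields at least $\tfrac{\gamma\delta^*}{2}n$ such coordinates, so the glued family is $(\delta^{**},n,\eps^{**})$-separated with the uniform constants $\delta^{**}:=\gamma\delta^*/2=\eta\delta^*/(4h_{\max})$ and $\eps^{**}:=\eps^*/2$. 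Replacing $\eta$ by $\eta/2$ throughout upgrades $2^{n(h_\lambda(T)-2\eta)}$ to the required $2^{n(h_\lambda(T)-\eta)}$; the degenerate cases $h_{\max}=0$ or $h_\lambda(T)\le\eta$ are immediate, since then the target drops below $1$ and a single glued point already witnesses $X_{n,F}\neq\emptyset$. The one genuinely hard point is the coding step of the third paragraph, which is precisely what renders $\delta^{**}$ independent of the denominator $c$.
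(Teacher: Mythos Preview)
Your proposal is correct and, in one crucial respect, more careful than the paper's own argument. Both follow the same gluing template: form a product of separated sets for the $\lambda_i$ and use almost specification to trace each tuple. The paper, however, uses exactly $c$ blocks (each of length $\widehat n+l\approx n/c$), takes the \emph{full} product $\prod_i\Gamma_i^{c_i}$, and asserts that any two distinct tuples yield $(\widetilde\delta^*/4,\,n,\,\widetilde\eps^*/2)$-separated tracing points, justifying this with the count ``at least $n'=c(\widehat n+l)\widetilde\delta^*-2cg(\widehat n+l,\eps)$ places''. That count is only valid when the two tuples differ in \emph{every} block; if they differ in a single coordinate, the actual number of separating indices is of order $(\widehat n+l)\widetilde\delta^*$, so the separation fraction is roughly $\widetilde\delta^*/c$, which depends on the denominator $c$ of $\lambda$ and is therefore not uniform. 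You have identified exactly this difficulty, and it is a genuine gap in the paper's computation.

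Your Gilbert--Varshamov thinning is a legitimate repair: by passing to a subfamily $\mathcal C$ whose tuples differ pairwise in at least a fixed fraction $\gamma=\eta/(2h_{\max})$ of the $C$ blocks, you secure the uniform separation constant $\delta^{**}=\gamma\delta^*/2$ while losing only a factor $2^{-n\eta}$ in cardinality, which is precisely the slack already budgeted. One detail to make explicit: the estimate $(\max_i|E_i|)^{\gamma C}\le 2^{\gamma Cm\,h_{\max}}$ requires $|E_i|\le 2^{mh_{\max}}$, which is not automatic for a \emph{maximal} separated subset of $X_{m,F_i}$. This is easy to arrange by taking each $E_i$ to be a separated subset of size exactly $\lceil 2^{m(h_{\lambda_i}(T)-\eta)}\rceil$ (any subset of a separated set remains separated), after which the Hamming-ball volume bound and the rest of your argument go through as written.
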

\begin{proof}
By lemma \ref{rational-convex-combination}, for any $\eta>0$, there exist $\widetilde{\delta}^*>0$ and $\widetilde{\eps}^*>0$ so that for any $\lambda_i$ and any neighborhood $F_i\subset M(X)$ of $\lambda_i$, there exists $n^*_{F_i,\lambda_i,\frac{\eta}{2}}$ such that
\begin{equation}
  N(F_i;\widetilde{\delta}^*,n,\widetilde{\eps}^*)\geq 2^{n(h_{\lambda_i}(T)-\frac{\eta}{2})}~\textrm{if}~n\geq n^*_{F_i,\lambda_i,\frac{\eta}{2}}.
\end{equation}
For any neighborhood $F\subset M(X)\cap K$ of $\lambda$, there exists a $r>0$ such that $(\mathcal{B}(\lambda,r)\cap K)\subset F$. Choose each $F_i=\mathcal{B}(\lambda_i,\frac{r}{2})$. Let $\eps=\min\{\frac{\widetilde{\eps}^*}{4},\frac{r}{6}\}$ and $\widetilde{n}\in\N$ be such that
$$\frac{g(n,\eps)}{n}\leq \min\left\{\frac{\widetilde{\delta^*}}{8},\frac{r}{12}\right\},~\forall n\geq\widetilde{n}.$$
Now set
$$\widehat{n}=\max_{i=1,\ldots,n}\left\{n^*_{F_i,\lambda_i,\frac{\eta}{2}},\widetilde{n},k_g(\eps),\frac{12}{r},\frac{2h_{\lambda}(T)}{\eta}\right\}.$$
One notes that $\widetilde{\eps}^*,\widetilde{\delta}^*$ only depend on $\eta$. One also notes that $r$ depends on $\lambda,F$. Moreover, $\{\lambda_i\}_{i=1}^n$ is fixed and thus $F_i$ only depends on $r$. So $\widehat{n}$ only depnds on $F,\lambda,\eta$. We prove that lemma \ref{lem-rational-covex} holds for $\delta^*=\frac{\widetilde{\delta}^*}{4}$, $\eps^*=\frac{\widetilde{\eps}^*}{2}$ and $n^*_{F,\lambda,\eta}=c\widehat{n}$.


Actually, for any $n\geq c\widehat{n}$, there exists a $l\in\N$ such that $c(\widehat{n}+l)\leq n<c(\widehat{n}+l+1)$. By lemma \ref{rational-convex-combination}, there exist $(\widetilde{\delta}^*,\widehat{n}+l,\widetilde{\eps}^*)$-separated sets $\Gamma_i$, $i=1,\ldots,n$ such that for any $x_i\in\Gamma_i$, $\rho(\mathcal{E}_{\widehat{n}+l}(x_i),\lambda_i)<r$. Moreover, one has
\begin{equation}\label{estimation-of-Gamma-i}
  |\Gamma_i|\geq 2^{(\widehat{n}+l)(h_{\lambda_i}(T)-\frac{\eta}{2})}.
\end{equation}

Consider the product space $\prod_{i=1}^n\Gamma_i^{c_i}$ whose elements are $\underline{x}=(x_{1,1},\ldots,x_{1,c_1}.\ldots,x_{n,1},\ldots,x_{n,c_n})$ with $$x_{i,j}\in\Gamma_i,~i=1,\ldots,n,~j=1,\ldots,c_i.$$
For any $\underline{x}\in\prod_{i=1}^n\Gamma_i^{c_i}$, by the almost specification property, there exists an $\widetilde{x}\in X$ such that
$$T^{l_{i,j}}(z)\in B_{\widehat{n}+l}(g;x_{i,j},\frac{\widetilde{\eps}^*}{4}),$$
where $l_{i,j}=\sum_{k=1}^{i-1}c_k+(j-1)c_i$. For simplicity, we say that $\widetilde{x}$ \emph{almost traces} $\underline{x}$.

Now we claim that for any $\underline{x},\underline{y}\in\prod_{i=1}^n\Gamma_i^{c_i}$ with $\underline{x}\neq\underline{y}$, the corresponding almost tracing points $\widetilde{x},\widetilde{y}\in X$ are $(\delta^*,n,\eps^*)$-separated.

In fact, it is not hard to see that there exist at least
$$n'=c(\widehat{n}+l)\widetilde{\delta}^*-2cg(\widehat{n}+l)$$
places in the orbits of $\widetilde{x}$ and $\widetilde{y}$ which differs by $\widetilde{\eps}^*-2\cdot\frac{\widetilde{\eps}^*}{4}=\frac{\widetilde{\eps}^*}{2}=\eps^*$.
An easy calculation shows that
$$\frac{n'}{n}\geq\frac{c(\widehat{n}+l)\widetilde{\delta}^*-2cg(\widehat{n}+l,\eps)}{c(\widehat{n}+l+1)}\geq \frac{\widetilde{\delta}^*}{2}-2\cdot \frac{\widetilde{\delta}^*}{8}=\frac{\widetilde{\delta}^*}{4}=\delta^*.$$

Therefore, if we define
$$S_{F,\lambda,\eta}=\left\{\widetilde{x}\in X\colon \widetilde{x}~\textrm{almost traces}~\textrm{some}~\underline{x}\in \prod_{i=1}^n\Gamma_i^{c_i}\right\},$$
one sees that $S_{F,\lambda,\eta}$ is a $(\delta,n,\eps^*)$-separated set.

Moreover, for any $\widetilde{x}\in S_{F,\lambda,\eta}$, we have the following estimations
\begin{eqnarray*}
  \rho\left(\mathcal{E}_n(\widetilde{x}),\mathcal{E}_{c(\widehat{n}+l)}(\widetilde{x})\right) &\leq& \frac{n-c(\widehat{n}+l)}{n}\leq \frac{2}{\widehat{n}}\leq \frac{r}{6} \\
  \rho\left(\mathcal{E}_{c(\widehat{n}+l)}(\widetilde{x}),\frac{\sum_{i,j}\mathcal{E}_{\widehat{n}+l}(x_{i,j})}{c(\widehat{n}+l)}\right) &\leq& \left(1-\frac{g(\widehat{n}+l,\eps)}{\widehat{n}+l}\right)\cdot\eps+\frac{g(\widehat{n}+l,\eps)}{\widehat{n}+l}\cdot2\leq \frac{r}{6}+\frac{r}{6}=\frac{r}{3} \\
  \rho\left(\frac{\sum_{i,j}\mathcal{E}_{\widehat{n}+l}(x_{i,j})}{c(\widehat{n}+l)},\lambda\right) &\leq& \sum_{i=1}^n\frac{c_i}{c}\cdot\frac{r}{2}=\frac{r}{2}
\end{eqnarray*}
Hence, we have $\mathcal{E}_n(\widetilde{x})\in \mathcal{B}(\lambda,r)\subset F$, which implies that $S_{F,\lambda,\eta}\subset X_{n,F}$. Then it is left to show that
$$|S_{F,\lambda,\eta}|= \prod_{i=1}^n|\Gamma_i|^{c_i}\geq 2^{n(h_{\lambda}(T)-\eta)}.$$

By \eqref{estimation-of-Gamma-i}, one has
$$\prod_{i=1}^n|\Gamma_i|^{c_i}\geq 2^{\sum_{i=1}^nc_i(\widehat{n}+l)(h_{\lambda_i}(T)-\frac{\eta}{2})}.$$
By the affinity of metric entropy,
$$h_{\lambda}(T)=h\left(T,\sum_{i=1}^n\frac{c_i}{c}\lambda_i\right)=\frac{1}{c}\sum_{i=1}^nc_ih(T,\lambda_i).$$
In addition, $\widehat{n}\geq \frac{2h_{\lambda}(T)}{\eta}$ gives
$$c(\widehat{n}+l)(h_{\lambda}(T)-\frac{\eta}{2})\geq c(\widehat{n}+l+1)(h_{\lambda}(T)-\eta)> n(h_{\lambda}(T)-\eta).$$
As consequences, we have
$$\prod_{i=1}^n|\Gamma_i|^{c_i}\geq 2^{c(\widehat{n}+l)(h_{\lambda}(T)-\frac{\eta}{2})}\geq 2^{n(h_{\lambda}(T)-\eta)}.$$
\end{proof}

In fact, the above finite rational convex combinations of invariant measures can approximate any measure $\mu$ in $K$ while their entropy can also approximate $h_{\mu}(T)$. In other words, they
play the part of ergodic measures used in \cite{PS2007} so that we can have a local uniform separation property here for our $K$. Therefore, we naturally arrive at  the following conclusion.
\begin{Lem}\label{lem-N-K}
For any $\eta>0$, there exist $\delta^*$ and $\eps^*>0$ so that for any $\mu\in K$ and any neighborhood $F\subset M(X)\cap K$ of $\mu$, there exists $n^*_{F,\mu,\eta}$ such that
$$N(F;\delta^*,n,\eps^*)\geq 2^{n(h_{\mu}(T)-\eta)}~\textrm{if}~n\geq n^*_{F,\mu,\eta}.$$
\end{Lem}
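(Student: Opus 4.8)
The plan is to deduce Lemma~\ref{lem-N-K} from Lemma~\ref{lem-rational-covex} by a density-and-continuity argument, passing from rational convex combinations of the fixed measures $\{\lambda_i\}_{i=1}^n$ to an arbitrary $\mu\in K$. First I would fix $\eta>0$ and invoke Lemma~\ref{lem-rational-covex} to obtain the \emph{universal} constants $\delta^*>0$ and $\eps^*>0$ (depending only on $\eta$, through $\widetilde\delta^*,\widetilde\eps^*$); these will be exactly the constants claimed in Lemma~\ref{lem-N-K}, so the crucial point is that they do \emph{not} degenerate as we vary $\mu$. The separation thresholds were extracted in Lemma~\ref{rational-convex-combination} uniformly over the finite family $\{\lambda_i\}$, and Lemma~\ref{lem-rational-covex} showed that the same $\delta^*,\eps^*$ work for every rational convex combination; this uniformity is precisely what lets the present statement hold with a single pair $(\delta^*,\eps^*)$.

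Next I would fix $\mu\in K$ and a neighborhood $F\subset M(X)\cap K$ of $\mu$, and choose a radius $r>0$ with $\mathcal{B}(\mu,r)\cap K\subset F$. Since $K$ is the convex hull of $\{\lambda_i\}_{i=1}^n$, the point $\mu$ is a convex combination $\mu=\sum_{i=1}^n t_i\lambda_i$ with $t_i\in[0,1]$, $\sum_i t_i=1$. The idea is to approximate the real weights $t_i$ by rationals $c_i/c$: by density of the rationals I can pick $\lambda=\sum_{i=1}^n \frac{c_i}{c}\lambda_i\in K$ with $\rho(\lambda,\mu)<r/2$, so that a small ball $\mathcal{B}(\lambda,r/2)\cap K\subset F$ is itself a neighborhood of $\lambda$ inside $K$. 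Applying Lemma~\ref{lem-rational-covex} to this rational $\lambda$ and the neighborhood $\mathcal{B}(\lambda,r/2)\cap K$ yields, for all large $n$,
\begin{equation*}
  N\bigl(\mathcal{B}(\lambda,r/2)\cap K;\delta^*,n,\eps^*\bigr)\geq 2^{n(h_{\lambda}(T)-\eta/2)},
\end{equation*}
and since $\mathcal{B}(\lambda,r/2)\cap K\subset F$, the separated set witnessing this also lies in $X_{n,F}$, giving the same lower bound for $N(F;\delta^*,n,\eps^*)$.

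It then remains to replace $h_\lambda(T)$ by $h_\mu(T)$ up to $\eta$. By affinity of the metric entropy, $h_\lambda(T)=\sum_i \frac{c_i}{c} h_{\lambda_i}(T)$ and $h_\mu(T)=\sum_i t_i h_{\lambda_i}(T)$, so $|h_\lambda(T)-h_\mu(T)|\le \sum_i |c_i/c - t_i|\,h_{\lambda_i}(T)$; choosing the rational approximation of the weights fine enough (this is an additional constraint on the $c_i/c$, compatible with the $\rho$-closeness constraint since both are governed by $\max_i|c_i/c-t_i|$) makes this difference less than $\eta/2$. Combining, $h_\lambda(T)-\eta/2 \ge h_\mu(T)-\eta$, and the bound $N(F;\delta^*,n,\eps^*)\ge 2^{n(h_\mu(T)-\eta)}$ follows for $n$ beyond the threshold $n^*_{F,\lambda,\eta}$ supplied by Lemma~\ref{lem-rational-covex}, which we rename $n^*_{F,\mu,\eta}$.

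The main obstacle, and the step I would treat most carefully, is the \emph{simultaneous} rational approximation of the weight vector $(t_i)$: I need a single choice of $(c_i,c)$ that at once makes $\rho(\lambda,\mu)$ small (so that $\lambda$ sits well inside $F$ and a genuine sub-neighborhood of it still lies in $F$) and makes $|h_\lambda(T)-h_\mu(T)|$ small. Both quantities are Lipschitz in $\max_i|c_i/c-t_i|$ — the former because $\rho$ is affine and bounded, the latter by the affinity-plus-finiteness of the entropies $h_{\lambda_i}(T)$ — so a sufficiently fine common denominator $c$ controls them together; the only thing to verify is that $h_\mu(T)<\infty$, which holds automatically on a compact metric system. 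No new use of the almost specification property is needed here: all the dynamics is already encapsulated in Lemma~\ref{lem-rational-covex}, and the present lemma is the clean topological statement that its conclusion extends from the dense rational combinations to all of $K$ with the \emph{same} $\delta^*,\eps^*$.
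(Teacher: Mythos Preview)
Your proposal is correct and follows essentially the same approach as the paper: approximate an arbitrary $\mu\in K$ by a rational convex combination $\lambda\in\widetilde K\cap F$, apply Lemma~\ref{lem-rational-covex} (with $\eta/2$) to $\lambda$, and then use affinity of the metric entropy to pass from $h_\lambda(T)$ back to $h_\mu(T)$ at the cost of the remaining $\eta/2$. Your treatment is in fact more explicit than the paper's---you spell out the simultaneous control of $\rho(\lambda,\mu)$ and $|h_\lambda(T)-h_\mu(T)|$ via $\max_i|c_i/c-t_i|$, whereas the paper simply asserts the existence of a suitable $\lambda$ (invoking, somewhat loosely, the ``ergodic decomposition theorem'' where affinity of entropy is what is actually used).
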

\begin{proof}
Let $\widetilde{K}$ be the set of rational convex combination of $\{\lambda_i\}_{i=1}^n$. According to  lemma \ref{lem-rational-covex}, we see that for any $\frac{\eta}{2}>0$, there exist $\delta^*>0$ and $\eps^*>0$ so that if $\lambda\in \widetilde{K}$, then for any neighborhood $F\subset M(X)$ of $\lambda$, there exists $n^*_{F,\lambda,\frac{\eta}{2}}$ such that \eqref{N-lambda} holds with $\eta$ replaced by $\frac{\eta}{2}$. Now if $\mu\in \widetilde{K}$, we are done. Otherwise, we choose by the ergodic decomposition theorem a $\lambda\in \widetilde{K}\cap F$ such that $h_{\mu}(T)-h_{\lambda}(T)\leq\frac{\eta}{2}$. The statement is true with $n^*_{F,\mu,\eta}=n^*_{F,\nu,\frac{\eta}{2}}$.
\end{proof}

Pfister and  Sullivan used the almost specification property and the uniform separation property to obtain the following variational principle for any compact connected non-empty set $K\subset M(T,X)$:
\begin{equation}\label{variational-principle}
  \htop(T,G_K)=\inf\{h_{\mu}(T)\colon \mu\in K\}.
\end{equation}

However, if one exploits their proof carefully, it is not hard to see that, in essence, their main result can be restated as follows.
\begin{Thm}
Assume $(X,f)$ has the almost specification property. Consider any compact connected non-empty set $K\subset M(T,X)$. If $K$ satisfies that for any $\eta>0$, there exist $\delta^*$ and $\eps^*>0$ so that for any $\mu\in K$ and any neighborhood $F\subset M(X)\cap K$ of $\mu$, there exists $n^*_{F,\mu,\eta}$ such that
$$N(F;\delta^*,n,\eps^*)\geq 2^{n(h_{\mu}(T)-\eta)}~\textrm{if}~n\geq n^*_{F,\mu,\eta},$$
then the variational principle \eqref{variational-principle} holds.
\end{Thm}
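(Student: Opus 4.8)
The plan is to retrace the argument of Pfister and Sullivan \cite{PS2007} and to check that, at each place where they invoke the global uniform separation property, the hypothesis placed on $K$ already supplies exactly what is required. I would prove the two inequalities in \eqref{variational-principle} separately. The upper bound $\htop(T,G_K)\leq \inf\{h_\mu(T):\mu\in K\}$ needs neither the almost specification property nor any separation hypothesis: for each fixed $\mu\in K$ one has $G_K\subseteq\{x\in X:\mu\in V_T(x)\}$, and a Katok-type covering argument relative to $\mu$ bounds the Bowen entropy of the latter set by $h_\mu(T)$; taking the infimum over $\mu\in K$ gives the claim. I would simply quote this half from \cite{PS2007}, since it is insensitive to whether separation is assumed globally or only on $K$.

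The substantive direction is the lower bound $\htop(T,G_K)\geq \inf\{h_\mu(T):\mu\in K\}=:s_K$. Fix $s<s_K$ and then $\eta>0$ with $s_K-\eta>s$; the point is that every $\mu\in K$ satisfies $h_\mu(T)\geq s_K$, hence $h_\mu(T)-\eta>s$, giving a \emph{uniform} entropy gain along $K$. Using the compactness and connectedness of $K$, I would first choose a sequence $(\mu_k)_{k\geq1}\subset K$, relative neighborhoods $F_k\subset M(X)\cap K$ of $\mu_k$, and shrinking radii so that consecutive measures are close and the sequence is dense in $K$; this scheduling is what will force the empirical measures of the constructed points to accumulate on all of $K$, i.e.\ to satisfy $V_T=K$. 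For each $k$, applying the hypothesis to $\mu_k\in K$ and $F_k$ produces, for all large $n$, a $(\delta^*,n,\eps^*)$-separated set $\Gamma_k\subset X_{n,F_k}$ with $|\Gamma_k|\geq 2^{n(h_{\mu_k}(T)-\eta)}$. This is the only step that uses the assumption, and the key observation is that it is invoked \emph{solely} for measures $\mu_k$ lying in $K$ and for neighborhoods $F_k$ taken relative to $K$, so the localized condition of the statement is exactly what is needed.

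With these ingredients I would run the standard concatenation. Choosing block lengths $n_k$ that increase rapidly and applying the almost specification property, I would glue arbitrary selections $x_{i,k}\in\Gamma_k$ into a single tracing point, thereby forming a Cantor-like set $Y\subseteq G_K$. Because two distinct selections differ on a positive density of coordinates while the gluing errors are absorbed by the mistake function $g$ (whose relative size tends to $0$), the tracing points remain $(\delta^*,\cdot\,,\eps^*/2)$-separated, which provides the required amount of branching. I would then place a Moran-type measure on $Y$ that spreads its mass uniformly over the branches and apply the mass distribution principle together with the Bowen-entropy definition of Section~\ref{entropy-for-maps} to obtain $\htop(T,Y)\geq s$, whence $\htop(T,G_K)\geq s$; letting $s\uparrow s_K$ finishes the proof.

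The hard part will be the simultaneous bookkeeping that guarantees $V_T(x)=K$ for every constructed $x$ and yields the entropy lower bound: the lengths $n_k$ must grow fast enough that each fresh block dominates the empirical measure (so that the accumulation set is controlled and, by connectedness, sweeps out exactly $K$), yet the mistake-function errors and the transition gaps must remain negligible in both the separated-set count and the measure estimate. Conceptually, the essence of the theorem is the remark that the Pfister--Sullivan construction only ever estimates separated sets at measures belonging to $K$; once this is checked, replacing the global uniform separation property by the localized hypothesis is immediate, and no new analytic difficulty arises beyond the estimates already carried out in \cite{PS2007}.
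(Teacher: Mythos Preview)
Your proposal is correct and aligns with the paper's own treatment: the paper does not give an independent proof of this theorem but simply records it as a restatement of the Pfister--Sullivan argument \cite{PS2007}, obtained by observing that their proof only ever invokes separation at measures $\mu\in K$ and neighborhoods $F\subset M(X)\cap K$. Your sketch is in fact more detailed than what the paper provides, but the strategy---upper bound via the covering inequality insensitive to separation, lower bound via the concatenation/Moran-measure construction using the localized hypothesis at each stage---is exactly the intended one.
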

In particular, our finite convex combination of invariant measures $K$ fulfills the condition. Thus we conclude the proof of theorem \ref{thm-var-prin}.

\section{Entropy for various sets}\label{proof-for-entropy}
\subsection{Proof of theorem \ref{thm-inter-reg-irreg}}
Suppose that $R_{\varphi_{1}}\cap I_{\varphi_{2}}\neq\emptyset$, then we select an $x\in R_{\varphi_{1}}\cap I_{\varphi_{2}}$. By definition, there exist two measures $\mu_{1},\mu_{2}\in V_{T}(x)$ such that $\mu_{1}\neq\mu_{2}$ and
$$\int\varphi_{1}d\mu_{1} = \int\varphi_{1}d\mu_{2},~~\int\varphi_{2}d\mu_{1} \neq \int\varphi_{2}d\mu_{2}.$$
For any $\eps>0$, according to variational principal \cite{Walters}, we select an invariant measure $\omega$ such that
$$h_{\omega}(T)>\htop(T)-\frac{\eps}{2}.$$
Then we choose $0\leq\theta\leq1$ close to $1$ such that
$$\theta h_{\omega}(T)>\htop(T)-\eps.$$
Define $\omega_{i}:=\theta\omega+(1-\theta)\mu_{i},~i=1,2.$ Then we can verify that
$$\int\varphi_{1}d\omega_{1} = \int\varphi_{1}d\omega_{2},~~\int\varphi_{2}d\omega_{1} \neq \int\varphi_{2}d\omega_{2}.$$
Define $K:=\{t\omega_{1}+(1-t)\omega_{2}|0\leq t\leq1\}$. By theorem \ref{thm-var-prin} and the affinity of entropy, we have
$$\htop(G_{K})=\min\{h_{\omega_{1}(T)},h_{\omega_{2}}(T)\}\geq \theta h_{\omega}(T)>\htop(T)-\eps.$$

On the other hand, for any $y\in G_{K}$, we have $V_{T}(y)=K$. Let $\int\varphi_{1}d\omega_{1} = \int\varphi_{1}d\omega_{2}=a$. Suppose $y\notin R_{\varphi_1}$. Then there exists a subsequence $\{u_k\}_k$ such that $\lim_k\frac{1}{u_k}\sum_{i=0}^{u_k-1}\varphi_1(T^iy)=b\neq a$. Select a subsequence $\{u_{k_n}\}$ of $u_k$ such that $\lim_n\mathcal{E}_{u_{k_n}}(y)=t\omega_{1}+(1-t)\omega_{2}$ for some $0\leq t\leq 1$. Then
$$\lim_{k}\frac{\sum_{i=0}^{u_k-1}\varphi_{1}(T^{i}(y))}{u_k}=\lim_{n}\frac{\sum_{i=0}^{u_{k_n}-1}\varphi_{1}(T^{i}(y))}{u_{k_n}}=\int\varphi_{1}d(t\omega_{1}+(1-t)\omega_{2})=a,$$
which is a contradiction.
Thus $G_{K}\subset R_{\varphi_{1}}$.

Moreover, suppose that $\lim_{k}\mathcal{E}_{l_{k}}(y)=\omega_{1}$ and $\lim_{k}\mathcal{E}_{m_{k}}(y)=\omega_{2}$. Then one can see that
$$\lim_{k}\frac{\sum_{i=0}^{l_{k}-1}\varphi_{1}(T^{i}(x))}{l_{k}}=\int\varphi_{2}d\omega_{1}\neq\int\varphi_{2}d\omega_{2}=\lim_{k}\frac{\sum_{i=0}^{m_{k}-1}\varphi_{1}(T^{i}(x))}{m_{k}}.$$
Thus $G_{K}\subset I_{\varphi_{2}}$. Hence $G_{K}\subset R_{\varphi_{1}}\cap I_{\varphi_{2}}$. By the arbitrariness of $\eps$, we see that
$$\htop(R_{\varphi_{1}}\cap I_{\varphi_{2}})=\htop(T).$$

\subsection{Proof of theorem \ref{thm-inter-level-irreg}}
Denote $h=\sup~\{h_{\mu}(T)~|~\mu\in M(T,X)~and~\int\varphi_{1} d\mu=a\}$. For any $x\in R_{\varphi_{1}}(a)\cap I_{\varphi_{2}}$, one has
$$\lim_{n}\frac{1}{n}\sum_{i=0}^{n-1}\varphi_{1}(T^{i}x)=a.$$
Then for any $\mu\in V_{T}(x)$, one has $\int\varphi_{1}d\mu=a$.
Now recall the following lemma from Bowen \cite{Bowen}.
\begin{Lem}\label{lem-Bowen}
Let $T:X\rightarrow X$ be a continuous map on a compact metric space. Set
$$QR(t)=\{x\in X~|~\exists\tau\in V_{T}(x)~\textrm{with}~h_{\tau}(T)\leq t\}.$$
Then
$$h_{top}(T,QR(t))\leq t.$$
\end{Lem}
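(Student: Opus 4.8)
The statement is Bowen's, and the plan is to bound his entropy $\htop(T,QR(t))$ directly from the covering definition recalled above. It suffices to fix $s>t$ and $\eps>0$ and to produce, for every threshold $N$, a countable cover of $QR(t)$ by Bowen balls $B_{n_i}(T,x_i,\eps)$ with all $n_i\ge N$ and with $\sum_i\exp(-sn_i)$ as small as we like; this forces $C(QR(t);s,\eps)=0$, hence $\htop(T,QR(t);\eps)\le t$, and letting $\eps\to0$ gives the claim. I would fix once and for all a finite Borel partition $\xi=\{C_1,\dots,C_k\}$ with $\mathrm{diam}(C_i)<\eps$. The elementary but crucial remark is that if two points share the same $\xi$-name $(i_0,\dots,i_{n-1})$ (meaning $T^jx\in C_{i_j}$), then $d(T^jx,T^jy)<\eps$ for $0\le j\le n-1$; consequently every length-$n$ name-cylinder lies inside a single Bowen ball $B_n(T,x,\eps)$, so an upper bound on the number of realized names is an upper bound on the number of Bowen balls needed.

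The heart of the matter is a localized counting estimate: for each $\mu\in M(T,X)$ with $h_\mu(T)\le t$ and each $\gamma>0$, there are a $weak^*$ neighborhood $F_\mu$ of $\mu$ and an integer $N_\mu$ such that for all $n\ge N_\mu$ the number of distinct length-$n$ $\xi$-names realized by points $x$ with $\mathcal{E}_n(x)\in F_\mu$ is at most $\exp(n(t+\gamma))$. To prove this I would read the condition $\mathcal{E}_n(x)\in F_\mu$, through test functions approximating the indicators of the cylinders of $\bigvee_{j=0}^{m-1}T^{-j}\xi$, as pinning the empirical frequencies of all length-$m$ blocks in the name of $x$ to within $\gamma'$ of their $\mu$-values. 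By the method of types for $(m-1)$-step Markov types, the number of length-$n$ words whose $m$-block frequencies lie in such a window is at most $\exp\!\big(n(H_\mu(\xi\mid\xi_1\vee\cdots\vee\xi_{m-1})+\delta(\gamma'))\big)$ with $\delta(\gamma')\to0$ as $F_\mu$ shrinks; since these conditional entropies decrease to $h_\mu(T,\xi)\le h_\mu(T)\le t$ as $m\to\infty$, taking $m$ large and then $F_\mu$ small yields the bound. I regard this estimate as the main obstacle, and the point requiring care is exactly this: because the count is pinned to $\mu$'s own block marginals, with the error governed only by continuity of finite-dimensional entropy in the type, no upper semicontinuity of $\mu\mapsto h_\mu(T)$ is invoked — which is what fails in general and what a naive ``$\sup$ over a neighborhood'' argument would illegitimately require.

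It remains to assemble a global cover. Since $M(X)$ is compact metrizable, hence second countable and Lindel\"of, the open cover $\{F_\mu:h_\mu(T)\le t\}$ of $E:=\{\mu\in M(T,X):h_\mu(T)\le t\}$ admits a countable subcover $\{F_{\mu_\ell}\}_{\ell\ge1}$. Given $x\in QR(t)$, choose $\tau_x\in V_T(x)$ with $h_{\tau_x}(T)\le t$ and the least $\ell$ with $\tau_x\in F_{\mu_\ell}$; as $\mathcal{E}_{n_k}(x)\to\tau_x$ along a subsequence and $F_{\mu_\ell}$ is open, infinitely many times $n$ satisfy $\mathcal{E}_n(x)\in F_{\mu_\ell}$, so one may select a good time $n(x)\ge\max(N+\ell,N_{\mu_\ell})$. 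Collecting, over all $\ell$ and all $n\ge\max(N+\ell,N_{\mu_\ell})$, the name-cylinders realized at scale $n$ by the points assigned to $\ell$ produces a cover of $QR(t)$ by sets each contained in an $(n,\eps)$-Bowen ball, and the counting estimate bounds their number at scale $n$ for index $\ell$ by $\exp(n(t+\gamma))$. With $s:=t+2\gamma$,
$$\sum_i\exp(-sn_i)\ \le\ \sum_{\ell\ge1}\sum_{n\ge N+\ell}\exp\big(n(t+\gamma)\big)\exp(-sn)\ \le\ \frac{1}{1-e^{-\gamma}}\sum_{\ell\ge1}e^{-\gamma(N+\ell)},$$
which is finite and tends to $0$ as $N\to\infty$. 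The offset $N+\ell$ in the threshold is precisely the device that tames the otherwise divergent sum over the countably many measures. Hence $C(QR(t);s,\eps)=0$ for every $s>t$ and $\eps>0$, giving $\htop(T,QR(t);\eps)\le t$ and, in the limit $\eps\to0$, the asserted inequality $\htop(T,QR(t))\le t$.
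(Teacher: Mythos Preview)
The paper does not prove this lemma; it simply quotes it from Bowen \cite{Bowen}, so there is no in-paper argument to compare against. Your sketch is a faithful outline of Bowen's original proof: localize near each $\mu$ with $h_\mu(T)\le t$ by a combinatorial name-counting estimate, extract a countable subcover of $\{\mu:h_\mu(T)\le t\}$ via Lindel\"of, and stagger the time thresholds by $N+\ell$ to make the double series converge.

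One point needs adjustment. You fix a single partition $\xi$ with $\mathrm{diam}\,C_i<\eps$ once and for all, and then argue that membership of $\mathcal{E}_n(x)$ in a weak$^*$ neighborhood $F_\mu$ pins the empirical $m$-block frequencies of the $\xi$-name of $x$ close to their $\mu$-values. But weak$^*$ closeness of $\mathcal{E}_n(x)$ to $\mu$ controls $\mathcal{E}_n(x)(A)$ only when $\mu(\partial A)=0$, and your globally fixed $\xi$ need not have $\mu$-null cylinder boundaries for every $\mu$ in $\{h_\mu\le t\}$; without this the step ``through test functions approximating the indicators'' does not go through. Bowen's remedy is to let the partition depend on $\mu$: for each $\mu$ choose $\xi_\mu$ with diameter below $\eps$ and $\mu(\partial\xi_\mu)=0$ (always available, e.g.\ from balls of $\mu$-continuity radius). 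The counting bound is then on $\xi_\mu$-names; since every length-$n$ $\xi_\mu$-cylinder still sits inside some $(n,\eps)$-Bowen ball, your cover assembly and the summation in the final display are unaffected. With this correction the argument is complete.
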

Therefore, by lemma \ref{lem-Bowen}, we have
$$\htop(T,R_{\varphi_{1}}(a)\cap I_{\varphi_{2}})\leq h.$$
It is left to show that
\begin{equation}\label{lower-bound-of-K}
 \htop(T,R_{\varphi_{1}}(a)\cap I_{\varphi_{2}})\geq h.
\end{equation}

For any $x\in R_{\varphi_{1}}(a)\cap I_{\varphi_{2}}$, there exist $\mu_{1},\mu_{2}\in V_{T}(x)$ such that
$$\int\varphi_{1}d\mu_{1} = \int\varphi_{1}d\mu_{2}=a,~~\int\varphi_{2}d\mu_{1} \neq \int\varphi_{2}d\mu_{2}.$$
For any $\eps>0$, choose an invariant measure $\omega$ such that $\int\varphi_{1}d\omega=a$ and
$$h_{\omega}(T)>h-\frac{\eps}{2}.$$
Then we select a $0\leq\theta\leq1$ close to $1$ such that
$$\theta h_{\omega}(T)>h-\eps.$$
Let $\omega_{i}:=\theta\omega+(1-\theta)\mu_{i}~i=1,2.$ Then we can verify that
$$\int\varphi_{1}d\omega_{1} = \int\varphi_{1}d\omega_{2}=a,~~\int\varphi_{2}d\omega_{1} \neq \int\varphi_{2}d\omega_{2}.$$
Define $K:=\{t\omega_{1}+(1-t)\omega_{2}|0\leq t\leq1\}$. By theorem \ref{thm-var-prin} and the affinity of entropy, we have
$$\htop(G_{K})=\min\{h_{\omega_{1}}(T),h_{\omega_{2}}(T)\}\geq \theta h_{\omega}(T)>h-\eps.$$

On the other hand, for any $y\in G_{K}$, we have $V_{T}(y)=K$. Suppose $y\notin R_{\varphi_1}(a)$. Then there exists a subsequence $\{u_k\}_k$ such that $\lim_k\frac{1}{u_k}\sum_{i=0}^{u_k-1}\varphi_1(T^iy)=b\neq a$. Select a subsequence $\{u_{k_n}\}$ of $u_k$ such that $\lim_n\mathcal{E}_{u_{k_n}}(y)=t\omega_{1}+(1-t)\omega_{2}$ for some $0\leq t\leq 1$. Then
$$\lim_{k}\frac{\sum_{i=0}^{u_k-1}\varphi_{1}(T^{i}(y))}{u_k}=\lim_{n}\frac{\sum_{i=0}^{u_{k_n}-1}\varphi_{1}(T^{i}(y))}{u_{k_n}}=\int\varphi_{1}d(t\omega_{1}+(1-t)\omega_{2})=a,$$
which is a contradiction.
Thus $G_K\subset R_{\varphi_1}(a)$.

Moreover, suppose that $\lim_{k}\mathcal{E}_{l_{k}}(y)=\omega_{1}$ and $\lim_{k}\mathcal{E}_{m_{k}}(y)=\omega_{2}$. Then one can see that
$$\lim_{k}\frac{\sum_{i=0}^{l_{k}-1}\varphi_{2}(T^{i}(y))}{l_{k}}=\int\varphi_{2}d\omega_{1}\neq\int\varphi_{2}d\omega_{2}=\lim_{k}\frac{\sum_{i=0}^{m_{k}-1}\varphi_{2}(T^{i}(y))}{m_{k}}.$$
Thus $G_{K}\subset I_{\varphi_{2}}$. Hence
$$G_{K}\subset R_{\varphi_{1}}(a)\cap I_{\varphi_{2}}$$ and \eqref{lower-bound-of-K} is proved. By the arbitrariness of $\eps$,
$$\htop(R_{\varphi_{1}}(a)\cap I_{\varphi_{2}})=h.$$

\subsection{Proof of theorem \ref{thm-level-of-irreg}}
When $c=d$, it is solved by theorem \ref{thm-inter-level-irreg}. So we suppose $c<d$. If $X_{\varphi}(c,d)=\emptyset$, it is trivial. Thus we assume $X_{\varphi}(c,d)\neq\emptyset$. Denote
$$h=\min_{\xi=c,d}~\sup~\left\{h_{\mu}(T)~|~\int\varphi d\mu=\xi\right\}.$$
Without lose of generality, we assume that
$$\sup~\left\{h_{\mu}(T)~|~\int\varphi d\mu=c\right\}\leq \sup~\left\{h_{\mu}(T)~|~\int\varphi d\mu=d\right\}.$$
For any $x\in X_{\varphi}(c,d)$, one has $\underline{\varphi}(x)=c$. So there exist a subsequence $\{n_k\}_{k\geq1}$ of $\mathbb{N}$ such that
$$\lim_{k}\frac{1}{n_k}\sum_{i=0}^{n_k-1}\varphi(T^{i}x)=c.$$
For any subsequence $\{n_{k_{l}}\}_{l\geq1}$ of $\{n_k\}_{k\geq1}$ such that
$\lim_{l}\mathcal{E}_{n_{k_{l}}}(x)=\mu,$
one has
$$\int\varphi d\mu=c~~\textrm{and}~~\mu\in M(T,X).$$
So by lemma \ref{lem-Bowen}, we have
$$\htop(T,X_{\varphi}(c,d))\leq \sup~\left\{h_{\mu}(T)~|~\int\varphi d\mu=c\right\}= h.$$
It remains to show that
\begin{equation}\label{lower-bound-of-X-varphi}
  \htop(T,X_{\varphi}(c,d))\geq h.
\end{equation}

Since $X_{\varphi}(c,d)\neq\emptyset$, there exist $\nu_{1},\nu_{2}\in M(T,X)$ such that
$$\int\varphi d\nu_{1}=c,~\int\varphi d\nu_{2}=d,$$
and
$$h_{\nu_{1}}(T)>h-\eps,~h_{\nu_{2}}(T)>h-\eps.$$
Define $K:=\{t\nu_{1}+(1-t)\nu_{2}~|~0\leq t\leq1\}$. Then, by theorem \ref{thm-var-prin}, one sees that
$$\htop(T,G_{K})=\min\{h_{\nu_{1}}(T),h_{\nu_{2}}(T)\}>h-\eps.$$

On the other hand, for any $x\in G_{K}$, one has $V_{T}(x)=K$. Thus there exist two subsequences $\{l_{k}\}_{k}$ and $\{m_{k}\}_{k}$ such that
$$\lim_{k}\mathcal{E}_{l_{k}}(x)=\nu_{1}~\textrm{and}~\lim_{k}\mathcal{E}_{m_{k}}(x)=\nu_{2}.$$
So we have
$$\lim_{k}\frac{1}{l_{k}}\sum_{i=0}^{l_{k}-1}\varphi(T^{i}x)=c~\textrm{and}~\lim_{k}\frac{1}{m_{k}}\sum_{i=0}^{m_{k}-1}\varphi(T^{i}x)=d.$$
Moreover, for any subsequence $\{n_k\}_{k\geq}$ of $\mathbb{N}$ such that
$$\lim_{k}\mathcal{E}_{n_k}(x)=t\nu_{1}+(1-t)\nu_{2},$$
one has $$\lim_{k}\frac{1}{n_k}\sum_{i=0}^{n_k-1}\varphi(T^{i}x)=t\int\varphi d\nu_{1}+(1-t)\int\varphi d\nu_{2}\in [c,d].$$
Thus one sees that
$$\underline{\varphi}(x)=c~\textrm{and}~\overline{\varphi}(x)=d.$$
In other words, one has $G_{K}\subset X_{\varphi}(c,d)$ and \eqref{lower-bound-of-X-varphi} is proved. By the arbitrariness of $\eps$,
$$\htop(T,X_{\varphi}(c,d))=h.$$

\subsection{Proof of theorem \ref{thm-multifractal}}

Denote
$$h=\min_{\xi=c,d}~\sup~\left\{h_{\mu}(T)~|~\mu\in M(T,X)~s.t.~\int\varphi_{i}d\mu=a_i,~i=1,\ldots,n~and~\int\psi d\mu=\xi\right\}$$ and we consider two cases.

\textbf{Case 1: $c=d.$}

For any $x\in (\bigcap_{i=1}^nR_{\varphi_{i}}(a_i))\cap R_{\psi}(c)$, there exists $\mu\in V_{T}(x)$ such that
$$\int\varphi_{i}d\mu=a_i,~i=1,\ldots,n~~\textrm{and}~~\int\psi d\mu=c.$$
Thus by lemma \ref{lem-Bowen}, we see that
$$\htop\left(T,(\bigcap_{i=1}^nR_{\varphi_{i}}(a_i))\cap R_{\psi}(c)\right)\leq h.$$

On the other hand, for any $\eps>0,$ there exist $\nu\in M(T,X)$ such that
$$\int\varphi_{i}d\nu=a_i,~i=1,\ldots,n~~\int\varphi_{2}d\nu=c~~\textrm{and}~~h_{\nu}(T)>h-\eps.$$
By theorem \ref{thm-var-prin}, we have
$$\htop(T,G_{\nu})=h_{\nu}(T)>h-\eps.$$
Since $G_{\nu}\in (\bigcap_{i=1}^nR_{\varphi_{i}}(a_i))\cap R_{\psi}(c)$ and by the arbitrariness of $\eps$, we see that
$$\htop\left(T,(\bigcap_{i=1}^nR_{\varphi_{i}}(a_i))\cap R_{\psi}(c)\right)= h.$$

\textbf{Case 2: $c<d.$}

If $X_{\varphi}(c,d)=\emptyset$, there is nothing to prove. So we suppose $X_{\varphi}(c,d)\neq\emptyset$. For any $x\in (\bigcap_{i=1}^nR_{\varphi_{i}}(a_i))\cap X_{\psi}(c,d)$, there exist $\mu_{1},\mu_{2}\in V_{T}(x)$ such that
$$\int\varphi_{i}d\mu_{1} = \int\varphi_{i}d\mu_{2}=a_i,~i=1\ldots,n,~~\int\psi d\mu_{1} = c~~\textrm{and}~~\int\psi d\mu_{2}=d.$$
Then by lemma \ref{lem-Bowen}, one has
$$\htop\left(T,(\bigcap_{i=1}^nR_{\varphi_{i}}(a_i))\cap X_{\psi}(c,d)\right)\leq \min\{h_{\mu_{1}}(T),h_{\mu_{2}}(T)\}\leq h.$$

On the other hand, for any $\eps>0,$ there exist $\nu_{1},\nu_{2}\in M(T,X)$ such that
$$ \int\varphi_{i}d\nu_{1} = \int\varphi_{i}d\nu_{2}=a_i,i=1,\ldots,n,$$
$$\int\psi d\nu_{1} = c~~\textrm{and}~~\int\psi d\nu_{2}=d,$$
$$h_{\nu_{1}}(T)>h-\eps,~~\textrm{and}~~h_{\nu_{2}}(T)>h-\eps.$$
Let $K=\{t\nu_{1}+(1-t)\nu_{2}~|~0\leq t\leq1\}$. Then, by theorem \ref{thm-var-prin},
$$\htop(T,G_{K})=\min\{h_{\nu_{1}}(T),h_{\nu_{2}}(T)\}>h-\eps.$$
On the other hand, for any $y\in G_{K}$, we have $V_{T}(y)=K$. Suppose $y\notin \bigcap_{i=1}^nR_{\varphi_i}(a_i)$. Then there exist an $1\leq i\leq n$ and a subsequence $\{u_k\}_k$ such that $\lim_k\frac{1}{u_k}\sum_{i=0}^{u_k-1}\varphi_i(T^iy)=b_i\neq a_i$. Select a subsequence $\{u_{k_n}\}$ of $u_k$ such that $\lim_n\mathcal{E}_{u_{k_n}}(y)=t\omega_{1}+(1-t)\omega_{2}$ for some $0\leq t\leq 1$. Then
$$\lim_{k}\frac{\sum_{i=0}^{u_k-1}\varphi_{i}(T^{i}(y))}{u_k}=\lim_{n}\frac{\sum_{i=0}^{u_{k_n}-1}\varphi_{i}(T^{i}(y))}{u_{k_n}}=\int\varphi_{i}d(t\omega_{1}+(1-t)\omega_{2})=a_i,$$
which is a contradiction.
Thus $G_K\subset \bigcap_{i=1}^nR_{\varphi_i}(a_i)$.

Moreover, a discussion similar to the proof in theorem \ref{thm-level-of-irreg} shows that $G_K\subset X_{\psi}(c,d)$. Therefore,
$$G_{K}\subset (\bigcap_{i=1}^nR_{\varphi_{i}}(a_i))\cap X_{\psi}(c,d).$$ Thus, by the arbitrariness of $\eps$, one has
$$\htop\left(T,(\bigcap_{i=1}^nR_{\varphi_{i}}(a_i))\cap X_{\psi}(c,d)\right)=h.$$

\section{Some comments on the topological pressure}\label{about-pressure}
Let $Z\subset X$ be an arbitrary Borel set, not necessarily compact or invariant. For $s\in \R$, we define the following quantities:
\begin{eqnarray*}
  Q(Z,s,\Gamma,\psi) &=& \sum_{B_{n_i}(x_i,\eps)\in\Gamma}\exp\left(-sn_i+\sup_{x\in B_{n_i}(x_i,\eps)}\sum_{k=0}^{n_i-1}\psi(f^kx)\right), \\
  M(Z,s,\eps,N,\psi) &=& \inf_{\Gamma}Q(Z,s,\Gamma,\psi),
\end{eqnarray*}
where the infimum is taken over all finite or countable collections of the form $\Gamma=\{B_{n_i}(x_i,\eps)\}_i$, with $x_i\in X$ such that $\Gamma$ covers $Z$ and $n_i\geq N$ for all $i\geq1$. We define
$$m(Z,s,\eps,\psi)=\lim_{N\to\infty}M(Z,s,\eps,N,\psi).$$
Since $M(Z,s,\eps,N,\psi)$ is non-decreasing in $N$, the limit exists. Then we define
$$P_Z(\psi,\eps):=\inf\{s:m(Z,s,\eps,\psi)=0\}=\sup\{s:m(Z,s,\eps,\psi)=\infty\}.$$
Finally, the \emph{topological pressure} of $\psi$ on $Z$ is given by \cite{Pesin}
$$P_Z(\psi)=\lim_{\eps\to0}P_Z(\psi,\eps).$$
Our results on the topological pressure can be phrased as follows.
\begin{Thm}\label{pressure-version}
Suppose $(X,T)$ satisfies the almost specification property and $\psi\in C(X)$. Then we have the following
\begin{enumerate}
  \item For any $\varphi_{1}, \varphi_{2}\in C(X)$, if $R_{\varphi_{1}}\cap I_{\varphi_{2}}\neq\emptyset$, then $P_{R_{\varphi_{1}}\cap I_{\varphi_{2}}}(\psi)=P_X(\psi)$.
  \item For any $\varphi_{1}, \varphi_{2}\in C(X)$, if $R_{\varphi_{1}}(a)\cap I_{\varphi_{2}}\neq\emptyset$, then
$$P_{R_{\varphi_{1}}(a)\cap I_{\varphi_{2}}}(\psi)=\sup~\left\{h_{\mu}(T)+\int\psi d\mu~|~\mu\in M(T,X)~and~\int\varphi_{1} d\mu=a\right\}.$$
  \item For any real numbers $c\leq d$, either $X_{\varphi}(c,d)$ is empty or
\begin{displaymath}
P_{X_{\varphi}(c,d)}(\psi)=\min_{\xi=c,d}~\sup~\left\{h_{\mu}(T)+\int\psi d\mu~|~\mu\in M(T,X)~and~\int\varphi d\mu=\xi\right\}.
\end{displaymath}
  \item Let $\{\varphi_i\}_{i=1}^n$ and $\psi$ be continuous functions on $X$. Then for any real numbers $\{a_i\}_{i=1}^n$ and $c\leq d$, either $X_{\psi}(c,d)$ is empty or
\begin{eqnarray*}
   & & P_{(\bigcap_{i=1}^nR_{\varphi_{i}}(a_i))\cap X_{\psi}(c,d)}(\psi) \\
   &=& \min_{\xi=c,d}~\sup~\left\{h_{\mu}(T)+\int\psi d\mu~|~\mu\in M(T,X)~s.t.~\int\varphi_{i}d\mu=a_i,~i=1,\ldots,n~and~\int\psi d\mu=\xi\right\}.
\end{eqnarray*}
\end{enumerate}
\end{Thm}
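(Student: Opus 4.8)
The plan is to run the four arguments of Section \ref{proof-for-entropy} essentially verbatim, replacing the entropy $h_\mu(T)$ everywhere by the affine functional
$$q(\mu):=h_\mu(T)+\int\psi\,d\mu,\qquad \mu\in M(T,X),$$
and replacing $\htop(T,\cdot)$ by $P_{\cdot}(\psi)$. Two ingredients are needed before this substitution can be carried out: a pressure analogue of Bowen's Lemma \ref{lem-Bowen} (which supplies the upper bounds) and a pressure analogue of Theorem \ref{thm-var-prin} (which supplies the lower bounds). Since $q$ is affine on $M(T,X)$ (entropy is affine and the integral is linear), every place where the affinity of entropy was invoked carries over to $q$, and every convex-combination estimate is identical; e.g. $q(\theta\omega+(1-\theta)\mu_i)=\theta q(\omega)+(1-\theta)q(\mu_i)\to q(\omega)$ as $\theta\to1$, using $q(\mu_i)\ge-\|\psi\|$.

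First I would prove the pressure form of Bowen's lemma: setting
$$QR_\psi(t):=\{x\in X:\ \exists\,\tau\in V_T(x)\ \text{with}\ q(\tau)\le t\},$$
one has $P_{QR_\psi(t)}(\psi)\le t$, equivalently $P_Z(\psi)\le\sup_{x\in Z}\inf_{\tau\in V_T(x)}q(\tau)$ for every Borel $Z$. The proof adapts Bowen's argument: for $x\in Z$ choose $\tau\in V_T(x)$ with $q(\tau)$ small and a time $n$ with $\mathcal E_n(x)$ close to $\tau$; then $\tfrac1n\sum_{k=0}^{n-1}\psi(T^kx)$ is close to $\int\psi\,d\tau$, the $h_\tau$-complexity near $x$ needs at most $2^{n(h_\tau+\eta)}$ Bowen balls, and each such ball carries weight $\exp(-sn+\sup_{B_n(x,\eps)}\sum\psi)$ with $\sup_{B_n}\sum\psi\le n\int\psi\,d\tau+n\,\mathrm{var}(\psi,\eps)$; summing and letting $\eps\to0$ gives the bound. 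Together with the trivial monotonicity $P_Z(\psi)\le P_X(\psi)$ for Part (1), this lemma yields every upper bound in Parts (1)--(4): for $x$ in each target set at least one limit measure $\tau\in V_T(x)$ meets the defining integral constraints. For instance in (3), $\underline\varphi(x)=c$ and $\overline\varphi(x)=d$ produce $\mu_c,\mu_d\in V_T(x)$ with $\int\varphi\,d\mu_c=c$, $\int\varphi\,d\mu_d=d$, whence $\inf_\tau q(\tau)\le\min\{q(\mu_c),q(\mu_d)\}\le\min_{\xi=c,d}\sup\{q(\mu):\int\varphi\,d\mu=\xi\}$.

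Next I would establish the pressure analogue of Theorem \ref{thm-var-prin}: if $K=\{\sum_i t_i\lambda_i\}$ is the simplex spanned by $\{\lambda_i\}_{i=1}^n$, then
$$P_{G_K}(\psi)=\min_{1\le i\le n}q(\lambda_i)=\inf_{\mu\in K}q(\mu),$$
the last equality because $q$ is affine on the simplex. The upper bound is the special case $Z=G_K$ of the pressure Bowen lemma (every $x\in G_K$ has $V_T(x)=K$, which contains the minimising vertex). For the lower bound I would re-run the separated-set construction of Lemmas \ref{rational-convex-combination}--\ref{lem-rational-covex}: the tracing points $\widetilde x$ built there have their empirical measures pinned near a prescribed target $\mu_n\in K$ at each checkpoint $n$, so along each orbit $\tfrac1n\sum_{k=0}^{n-1}\psi(T^k\widetilde x)$ lies within $o(1)$ of $\int\psi\,d\mu_n$. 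Spreading mass uniformly over these separated families yields a measure $\mathfrak m$ on $G_K$ obeying a Frostman-type bound $-\log_2\mathfrak m(B_n(\widetilde x,\eps))\ge n(h_{\mu_n}-\eta)$; feeding this and the $\psi$-estimate into the mass distribution principle for the pressure gives
$$P_{G_K}(\psi)\ \ge\ \liminf_{n\to\infty}\big(h_{\mu_n}+\textstyle\int\psi\,d\mu_n\big)-\eta\ =\ \inf_{\mu\in K}q(\mu)-\eta,$$
the final equality holding because the orbit of $\widetilde x$ approximates the minimising vertex infinitely often. Letting $\eta,\eps\to0$ closes the lower bound. With both lemmas in hand, each of Parts (1)--(4) is obtained by copying the corresponding argument of Section \ref{proof-for-entropy}: one selects an invariant measure (or a pair $\nu_1,\nu_2$) realising the relevant supremum of $q$ under the integral constraints --- for Part (1) this supremum is $P_X(\psi)$ by the classical variational principle for pressure --- forms the simplex $K$, verifies $G_K\subset(\text{target set})$ exactly as before, and reads off $P_{G_K}(\psi)=\min_i q(\lambda_i)$.

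The main obstacle is the lower bound of the pressure variational principle, and specifically the appearance of the \emph{infimum} $\inf_{\mu\in K}q(\mu)$ rather than the barycentre value $q(\sum_i\frac{c_i}{c}\lambda_i)$. The reason is structural: the Bowen--Pesin pressure is a Hausdorff-type quantity in which an admissible cover may choose, for each point of $G_K$, whichever checkpoint $n$ makes the weight $\exp(-sn+\sup_{B_n}\sum\psi)$ cheapest, i.e. the time at which the empirical measure is closest to the minimising vertex $\lambda_{i^*}$; dually, the Frostman bound for $\mathfrak m$ must hold simultaneously at \emph{all} large checkpoints, and since the constructed orbits visit a dense subset of $K$ the liminf forces the exponent down to $\inf_{\mu\in K}q(\mu)$. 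Making this estimate uniform over checkpoints while keeping the orbit inside $G_K$ (so that $V_T(\widetilde x)=K$, not a proper subset) is the delicate point; once the contribution $\tfrac1n\sum\psi(T^k\widetilde x)\to\int\psi\,d\mu_n$ is controlled through the empirical measures --- which is exactly the extra content introduced by $\psi\ne0$ --- the remaining bookkeeping is identical to the entropy case of Section \ref{technical-thm}.
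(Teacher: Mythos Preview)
Your proposal is correct and follows essentially the same route as the paper: obtain a pressure analogue of Bowen's Lemma \ref{lem-Bowen} for the upper bounds, a pressure analogue of Theorem \ref{thm-var-prin} for the lower bounds, and then rerun Section \ref{proof-for-entropy} with $h_\mu(T)$ replaced by $q(\mu)=h_\mu(T)+\int\psi\,d\mu$. The only difference is packaging: the paper invokes Chen--Pei \cite[Theorems 2.1 and 3.1]{Chen-Pei} for these two pressure lemmas rather than sketching them from scratch, and formulates the upper-bound lemma via the set $^K G=\{x:\{\mathcal E_n(x)\}\ \text{has a limit point in}\ K\}$ instead of your $QR_\psi(t)$, but these are equivalent and your discussion of why the infimum over $K$ appears in $P_{G_K}(\psi)$ is exactly the point behind Theorem \ref{thm-var-prin-pressure}.
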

\begin{proof}
We only give a sketch of the proof. Let $^KG=\{x\in X:\{\E_n(x)\}_n~\textrm{has a limit-point in}~ K \}$. Analogues to Bowen's lemma \ref{lem-Bowen}, it is shown that \cite[Theorem 4.1]{PS2007}
$$\htop(T,^KG)\leq \sup\{h(T,\rho):\rho\in K\}.$$
This inequality is generalized to the pressure by Chen-Pei \cite[Theorem 3.1]{Chen-Pei}:
\begin{equation}\label{ine-pressure}
  P_{^KG}(\psi)\leq \sup\left\{h(T,\mu)+\int \psi d\mu:\mu\in K\right\}.
\end{equation}
Moreover, using a similar discussion to the proof of \cite[Theorem 2.1]{Chen-Pei} and our theorem \ref{thm-var-prin}, we can show the following:
\begin{Thm}\label{thm-var-prin-pressure}
If $K=\{\sum_{i=1}^{n}t_{i}\lambda_{i}:t_{i}\in[0,1],\sum_{i=1}^{n}t_{i}=1,i=1,\cdots,n\}$
for some finite subset $\{\lambda_i\}_{i=1}^n\subset M(T,X)$, then
$$P_{G_K}(\psi)=\min\left\{h_{\lambda_i}(T)+\int\psi d\lambda_i,i=1,\ldots,n\right\}.$$
\end{Thm}
With the help of the inequality \eqref{ine-pressure} and theorem \ref{thm-var-prin-pressure}, we conclude the proof by a superficial modification from the languages of entropy to the ones of pressure.

\end{proof}

\section{Application to suspension flows}\label{suspension-flow}

Let $f:X\to X$ be a homeomorphism of
a compact metric space $(X,d)$. We consider a continuous roof function $\rho\colon X\mapsto(0,+\infty)$.
We define the suspension space to be
$$X^{\rho}=\{(x,s)\in X\times\R\colon 0\leq s\leq \rho(x)\},$$
where $(x,s)$ is identified with $(f(x),0)$ for all $x$. Alternatively, we can define $X^{\rho}$ to be
$X\times[0,+\infty)$, quotiented by the equivalence relation $(x,t)\sim(y,s)$ iff $(x,t)=(y,s)$ or there
exists $n\in \N$ so $(f^nx,t-\sum_{i=0}^{n-1}\rho(f^ix))=(y,s)$ or $(f^{-n}x,t-\sum_{i=0}^{n-1}\rho(f^{-i}x))=(y,s)$. Let
$\pi$
denote the quotient map from $X\times[0,+\infty)$ to $X^{\rho}$. We extend the domain of
$\pi$ to $X\times(-\inf\rho,\infty)$ by identifying points of the form $(y,-t)$ with $(f^{-1}y,\rho(y)-t)$ for
$t\in(0,\inf\rho)$. We write $(x,s)$ in place of
$\pi(x,s)$ when $-\inf\rho<s<\rho(x)$. We define the flow
$\Psi=\{g_t\}$ on $X^{\rho}$ by
$$g_t(x,s)=\pi(x,s+t).$$
To a function $\Phi:X_{\rho}\to \R$, we associate the function $\varphi:X\to\R$ by $\varphi(x)=\int_0^{\rho(x)}\Phi(x,t)dt$. Since the roof function is continuous, when $\Phi$ is continuous, so is $\varphi$. For $\mu\in M(f,X)$, we define the measure $\mu_{\rho}$ by
$$\int_{X_{\rho}}\Phi d\mu_{\rho}=\int_X\varphi d\mu/\int\rho d\mu$$
for all $\Phi\in C(X_{\rho},\R)$, where $\varphi$ is defined as above. We have $\Psi$-invariance of $\mu_{\rho}$ (i.e. $\mu(g_t^{-1}A)=\mu(A)$ for all $t\geq0$ and measurable sets $A$). The map $\mathcal{R}:M(f,X)\to M(\Psi,X_{\rho})$ given by $\mu\mapsto\mu_{\rho}$ is a bijection. Abramov's theorem \cite{Abramov,Parry-Pollicott} states that $h_{\mu_{\rho}}=h_{\mu}/\int\rho d\mu$ and hence,
$$\htop(\Psi)=\sup\{h_{\mu_{\rho}}:\mu_{\rho}\in M(\Psi,X_{\rho})\}=\sup\left\{\frac{h_{\mu}}{\int\rho d\mu}:\mu\in M(f,X)\right\},$$
where $\htop(\Psi)$ is the topological entropy of the flow.
\subsection{Definitions}
Analogous to the discrete case, for a continuous function $\Phi\colon X^{\rho}\mapsto\R$, we define
$$\underline{\Phi}(x,s)=\liminf_{T\to\infty}\frac1T\int_0^T\Phi(g_t(x,s))dt~~\textrm{and}~~\overline{\Phi}(x,s)=\limsup_{T\to\infty}\frac1T\int_0^T\Phi(g_t(x,s))dt.$$
Then we divide $X^{\rho}$ into level sets $X^{\rho}=\bigsqcup_{c\leq d}X^{\rho}_{\Phi}(\Psi,c,d)$, where
$$X^{\rho}_{\Phi}(\Psi,c,d)=\{(x,s)\in X^{\rho}\colon \underline{\Phi}(x,s)=c~\textrm{and}~\overline{\Phi}(x,s)=d\}$$
Define
$$R^{\rho}_{\Phi}(\Psi,a):=X^{\rho}_{\Phi}(\Psi,a,a),~~R^{\rho}_{\Phi}:=\bigsqcup_{a\in \mathbb{R}}R^{\rho}_{\Phi}(\Psi,a)~~\textrm{and}~~I^{\rho}_{\Phi}:=\bigsqcup_{c<d}X^{\rho}_{\Phi}(\Psi,c,d).$$
Moreover, the entropy for a flow $\Psi=\{\psi_t\}$ can also be defined analogous to the discrete case.

For any $x\in X$ and $\eps>0$, define
$$B_t(\Psi,x,\eps) = \{y\in X:d(\psi_{\tau}(x),\psi_{\tau}(y))<\eps,\tau\in[0,t)\},t\geq0.$$
Let $s\in \R$. We define the following quantities:
\begin{eqnarray*}
  Q(Z,s,\Gamma) &=& \sum_{B_{t_i}(\Psi,x_i,\eps)\in\Gamma}\exp\left(-st_i\right), \\
  M(Z,s,\eps,N) &=& \inf_{\Gamma}Q(Z,s,\Gamma),
\end{eqnarray*}
where the infimum is taken over all finite or countable collections of the form $\Gamma=\{B_{t_i}(\Psi,x_i,\eps)\}_i$, with $x_i\in X$ such that $\Gamma$ covers $Z$ and $t_i\geq N$ for all $i\geq1$.

Since $M(Z;s,\eps,N)$ is non-decreasing in $N$, we define
$$M(Z;s,\eps):=\lim_{N\rightarrow\infty}M(Z;s,\eps,N).$$
Now let
$$\htop(\Psi,Z,\eps) := \inf\{s:M(Z,s,\eps)=0\}=\sup\{s:M(Z,s,\eps)=\infty\}.$$
Then the \emph{topological entropy} of $Z$ with respect to $\Psi$ is
$$\htop(\Psi,Z):=\lim_{\eps\to0}\htop(\Psi,Z,\eps).$$

\subsection{Results}
For the suspension flows of $(X,f)$ which satisfies the specification property, Thompson proved the following variational principle \cite[Theorem 4.2]{Thompson2009}:
$$\htop(\Psi,R^{\rho}_{\Phi}(\Psi,a))=\sup\left\{h_{\mu_{\rho}}:\mu_{\rho}\in M(\Psi,X^{\rho})~\textrm{and}~\int\Phi d\mu=\alpha\right\}.$$
Thompson also proved that if $\inf_{\mu_{\rho}\in M(\Psi,X^{\rho})}\int\Phi d\mu_{\rho}<\sup_{\mu_{\rho}(\Psi,X^{\rho})}\int\Phi d\mu_{\rho}$, then
$$\htop(I^{\rho}_{\Phi},\Psi)=\htop(\Psi).$$
Here we also have various variational principles for the continuous case.
\begin{Thm}
Let $(X,d)$ be a compact metric space and $f\colon X\to X$ be a homeomorphism with the almost specification property. Let $\rho\colon X\to (0,\infty)$ be continuous. Let $(X^{\rho},\Psi)$ be the corresponding suspension flow over $X$. Then
\begin{enumerate}
  \item For any $\Phi_{1}, \Phi_{2}\in C(X^{\rho})$, $R^{\rho}_{\Phi_1}\cap I^{\rho}_{\Phi_2}$ has full entropy or is empty.
  \item For any $\Phi_{1}, \Phi_{2}\in C(X^{\rho})$, if $R^{\rho}_{\Phi_1}(\Psi,a)\cap I^{\rho}_{\Phi_2}\neq\emptyset$, then
$$\htop(\Psi,R^{\rho}_{\Phi_1}(\Psi,a)\cap I^{\rho}_{\Phi_2})=\sup~\left\{h_{\mu_{\rho}}(\Psi)~|~\mu_{\rho}\in M(\Psi,X^{\rho})~and~\int\Phi_{1} d\mu_{\rho}=a\right\}.$$
  \item \label{flow-3}For any real numbers $c\leq d$, either $X^{\rho}_{\Phi}(\Psi,c,d)$ is empty or
\begin{displaymath}
\htop(\Psi,X^{\rho}_{\Phi}(\Psi,c,d))=\min_{\xi=c,d}~\sup~\left\{h_{\mu_{\rho}}(\Psi)~|~\mu_{\rho}\in M(\Psi,X^{\rho})~and~\int\Phi d\mu_{\rho}=\xi\right\}.
\end{displaymath}
  \item Let $\{\Phi_i\}_{i=1}^n$ and $\Xi$ be continuous functions on $X^{\rho}$. Then for any real numbers $\{a_i\}_{i=1}^n$ and $c\leq d$, either $X^{\rho}_{\psi}(\Psi,c,d)$ is empty or
\begin{eqnarray*}
   & & \htop\left(\Psi,(\bigcap_{i=1}^nR^{\rho}_{\Phi_i}(\Psi,a_i)\cap X^{\rho}_{\Xi}(\Psi,c,d)\right) \\
   &=& \min_{\xi=c,d}~\sup~\left\{h_{\mu_{\rho}}(\Psi)~|~\mu_{\rho}\in M(\Psi,X^{\rho})~s.t.~\int\Phi_{i}d\mu_{\rho}=a_i,~i=1,\ldots,n~and~\int\Xi d\mu_{\rho}=\xi\right\}.
\end{eqnarray*}
\end{enumerate}
\end{Thm}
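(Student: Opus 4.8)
The plan is to reduce the four flow statements to their already-proved discrete counterparts (Theorems \ref{thm-inter-reg-irreg}--\ref{thm-multifractal}) through the suspension correspondence, in the spirit of Thompson \cite{Thompson2009}. The three pillars of the reduction are exactly the facts recorded in this section: the bijection $\mathcal{R}\colon M(f,X)\to M(\Psi,X^{\rho})$, $\mu\mapsto\mu_{\rho}$; the integral identity $\int_{X^{\rho}}\Phi\,d\mu_{\rho}=\int_{X}\varphi\,d\mu\big/\int_{X}\rho\,d\mu$ with $\varphi(x)=\int_{0}^{\rho(x)}\Phi(x,t)\,dt$; and Abramov's formula $h_{\mu_{\rho}}(\Psi)=h_{\mu}(f)\big/\int_{X}\rho\,d\mu$. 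Together these translate every measure-theoretic constraint and every entropy on the right-hand sides of the flow theorems into a statement about the base system $(X,f)$, so that the suprema/minima over $M(\Psi,X^{\rho})$ become suprema/minima over $M(f,X)$.

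First I would fix the dictionary between the two time scales. Writing $S_{n}\varphi(x)=\sum_{i=0}^{n-1}\varphi(f^{i}x)$ and $S_{n}\rho(x)=\sum_{i=0}^{n-1}\rho(f^{i}x)$, the flow average $\tfrac1T\int_0^T\Phi(g_t(x,s))\,dt$ is asymptotically independent of $s$ and satisfies
\begin{equation*}
\underline{\Phi}(x,s)=\liminf_{n\to\infty}\frac{S_{n}\varphi(x)}{S_{n}\rho(x)},\qquad
\overline{\Phi}(x,s)=\limsup_{n\to\infty}\frac{S_{n}\varphi(x)}{S_{n}\rho(x)},
\end{equation*}
the $O(1)$ discrepancies from partial loops being washed out since $S_{n}\rho(x)\ge n\inf\rho\to\infty$. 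Because $\rho$ is continuous and strictly positive on a compact space, $\inf\rho>0$ and $\tfrac1nS_{n}\rho\in[\inf\rho,\sup\rho]$; hence for a fixed level $\xi$ the condition $S_{n}\varphi/S_{n}\rho\to\xi$ is equivalent to $\tfrac1nS_{n}(\varphi-\xi\rho)\to0$. This device converts the ratio averages intrinsic to the flow into genuine Birkhoff averages of the base function $\varphi-\xi\rho$, so that the flow level set $X^{\rho}_{\Phi}(\Psi,c,d)$ is the suspension of a base set controlled by $\varphi-c\rho$ and $\varphi-d\rho$. The upper bounds in all four parts then follow from the flow analogue of Bowen's Lemma \ref{lem-Bowen} (applied to the empirical-measure limit set), once one checks that every $\nu=\mu_{\rho}\in V_{\Psi}(x,s)$ meets the prescribed integral constraint, exactly as in Sections \ref{proof-for-entropy}.

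For the lower bounds I would transport the saturated-set machinery. Under $\mathcal{R}$ a generic point $x$ for $\mu$ in the base gives a generic point $(x,s)$ for $\mu_{\rho}$ under $\Psi$, and a compact connected $K\subset M(f,X)$ corresponds to a compact connected $K_{\rho}=\mathcal{R}(K)\subset M(\Psi,X^{\rho})$; note $\mathcal{R}$ is \emph{not} affine, so $K_{\rho}$ is the image curve of a segment rather than a segment, but it remains compact connected and $G_{K_{\rho}}$ is well defined. The crux is a flow version of Theorem \ref{thm-var-prin}: for $K_{\rho}$ the image of a finite convex-combination hull,
\begin{equation*}
\htop(\Psi,G_{K_{\rho}})=\min_{i}h_{(\lambda_{i})_{\rho}}(\Psi)=\min_{i}\frac{h_{\lambda_{i}}(f)}{\int\rho\,d\lambda_{i}}.
\end{equation*}
I would derive this by relating a flow Bowen ball $B_{t}(\Psi,\cdot,\eps)$ to base Bowen balls $B_{n}(f,\cdot,\eps)$ — a time-$t$ flow orbit unfolds into roughly $t/\int\rho\,d\mu$ base iterates on the relevant generic set — and invoking the base computation of $\htop(f,G_{K})$ from Theorem \ref{thm-var-prin} together with Abramov's reparametrization to carry the $1/\int\rho$ factor. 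With this in hand the arguments of Sections \ref{proof-for-entropy} transcribe essentially verbatim: choose high-entropy measures at the correct levels, convex-combine the witnesses $\nu_{1},\nu_{2}$ realizing the extreme levels $c,d$, and read off $\htop(\Psi,G_{K_{\rho}})>h-\eps$ while verifying $G_{K_{\rho}}\subset X^{\rho}_{\Phi}(\Psi,c,d)$.

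The main obstacle I expect is the interface between the two time parametrizations: controlling how a flow Bowen ball of length $t$ unfolds into base Bowen balls whose lengths fluctuate with $S_{n}\rho$, so that the covering estimates pick up exactly the Abramov factor $1/\int\rho\,d\mu$ rather than an averaged approximation, and so that the non-affineness of $\mathcal{R}$ does not spoil the entropy bookkeeping along $K_{\rho}$. The ratio-to-average conversion is clean at a single level $\xi$, but it must be made uniform over the generic set to guarantee that the suspended saturated points have the prescribed $\liminf$ and $\limsup$; reconciling the along-orbit fluctuations of $S_{n}\rho$ with the $\eps$-covering estimates is the delicate point, and this is where Thompson's suspension estimates \cite{Thompson2009} together with Abramov's theorem \cite{Abramov,Parry-Pollicott} would be invoked to close the gap.
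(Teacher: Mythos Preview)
Your reduction strategy is coherent, but it diverges substantially from the paper's argument, and the place where you admit difficulty is exactly the place the paper deliberately avoids. The paper does \emph{not} attempt a flow version of Theorem~\ref{thm-var-prin}. For the lower bound in item~\ref{flow-3} it instead passes through \emph{topological pressure} on the base: it invokes Thompson's inequality \cite[Theorem~5.8]{Thompson2010} that $\htop(\Psi,Z_{\rho})\geq\beta$ where $\beta$ solves $P_{Z}(-t\rho)=0$, and then computes $P_{X_{\varphi,\rho}(c,d)}(-h\rho)$ via the pressure analogue (Theorem~\ref{pressure-version}) applied to the base ratio-level set. The Abramov factor then drops out algebraically from the equation $P=0$, with no need to compare flow and base Bowen balls. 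For the upper bound the paper uses the Bowen--Walters fact that any suspension is conjugate to the one with $\rho\equiv 1$, reduces to that case, compares the flow entropy with the entropy of the time-$1$ map $\Psi^{1}$ via the trivial covering inequality $\htop(\Psi,Z)\leq\htop(\Psi^{1},Z)$, applies the discrete Theorem~\ref{thm-level-of-irreg} to $(X^{1},\Psi^{1})$, and then averages a $\Psi^{1}$-invariant measure over a unit time interval to obtain a genuinely $\Psi$-invariant one with the same entropy and integral.

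Thus the paper's two halves recycle existing tools (base pressure, constant-roof conjugacy, time-$1$ map) and never face your ``main obstacle'' of tracking $S_{n}\rho$-fluctuations through Bowen-ball estimates. Your route could in principle be made to work, and your ratio-to-Birkhoff conversion $S_n\varphi/S_n\rho\to\xi\iff \tfrac1nS_n(\varphi-\xi\rho)\to0$ is a clean device; but the flow saturated-set formula $\htop(\Psi,G_{K_{\rho}})=\min_{i}h_{\lambda_{i}}(f)/\int\rho\,d\lambda_{i}$ that you need is a genuine new theorem, not a corollary, and your sketch of its proof (unfolding flow balls into base balls) is precisely the delicate reparametrization analysis the paper's pressure detour is designed to bypass. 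If you want to pursue your line, the honest thing is to isolate that flow variational principle as a separate proposition and prove it; otherwise, adopting the pressure-plus-constant-roof scheme gives a complete argument with no new machinery.
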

\begin{proof}
We give the proof for item \ref{flow-3}. The proof for the other three takes a similar style and needs a slight modification.

\textbf{The part for ``$\geq$''.} For an arbitrary Borel set $Z\subset X$, define $Z_{\rho}=\{(z,s):z\in Z,0\leq s<\rho((s)\}$. Thompson proved in \cite[Theorem 5.8]{Thompson2010} that, if $\beta$ is the unnique solution to the equation $P_Z(-t\rho)=0$, then $\htop(\Psi,Z_{\rho})\geq\beta$. Meanwhile, one can use the method used in the proof of theorem \ref{pressure-version} to show that
for any real numbers $c\leq d$, either $X_{\varphi,\rho}(c,d)$ is empty or
\begin{displaymath}
P_{X_{\varphi,\rho}(c,d)}(\psi)=\min_{\xi=c,d}~\sup~\left\{h_{\mu}(T)+\int\psi d\mu~|~\mu\in M(T,X)~and~\frac{\int\varphi d\mu}{\int\rho d\mu}=\xi\right\}.
\end{displaymath}
Here $X_{\varphi,\rho}(c,d)$ is defined as
$$X_{\varphi,\rho}(c,d)=\left\{x\in X:\liminf_{n\to \infty}\frac{\sum_{i=1}^{n-1}\varphi(f^ix)}{\sum_{i=1}^{n-1}\rho(f^ix)}=c~\textrm{and}~\limsup_{n\to \infty}\frac{\sum_{i=1}^{n-1}\varphi(f^ix)}{\sum_{i=1}^{n-1}\rho(f^ix)}=d\right\}.$$
Now let $A=X_{\varphi,\rho}(c,d)$ and
$$P_{A}(-h\rho)=\min_{\xi=c,d}~\sup~\left\{h_{\mu}(T)-h\int\rho d\mu~|~\mu\in M(T,X)~and~\frac{\int\varphi d\mu}{\int\rho d\mu}=\xi\right\}=0.$$
Without lose of generality, we suppose
$$\sup~\left\{h_{\mu}(T)-h\int\rho d\mu~|~\mu\in M(T,X)~and~\frac{\int\varphi d\mu}{\int\rho d\mu}=c\right\}=0.$$
Thus $h\geq\frac{h_{\mu}(T)}{\int\rho d\mu}$ for any $\mu\in M(T,X)$ such that $\frac{\int\varphi d\mu}{\int\rho d\mu}=c$. This implies that
\begin{eqnarray*}
  h &\geq& \sup\left\{\frac{h_{\mu}(T)}{\int\rho d\mu}:\mu\in M(T,X),\frac{\int\varphi d\mu}{\int\rho d\mu}=c\right\} \\
   &=& \sup\left\{h_{\mu_{\rho}}(T):\mu_{\rho}\in M(\Psi,X^{\rho}),\int\Phi d\mu_{\rho}=c\right\}.
\end{eqnarray*}
On the other hand, we have (see \cite{Thompson2010})
\begin{eqnarray*}
  \liminf_{T\to\infty}\frac1T\int_0^T\Phi(g_t(x,s))dt &=& \liminf_{n\to \infty}\frac{\sum_{i=1}^{n-1}\varphi(f^ix)}{\sum_{i=1}^{n-1}\rho(f^ix)}, \\
  \limsup_{T\to\infty}\frac1T\int_0^T\Phi(g_t(x,s))dt &=&  \limsup_{n\to \infty}\frac{\sum_{i=1}^{n-1}\varphi(f^ix)}{\sum_{i=1}^{n-1}\rho(f^ix)}.
\end{eqnarray*}
So $X^{\rho}_{\Phi}(\Psi,c,d)=A_{\rho}$, which infers that
$$\htop(\Psi,X^{\rho}_{\Phi}(\Psi,c,d))\geq h\geq\min_{\xi=c,d}\sup\left\{h_{\mu_{\rho}}(T):\mu_{\rho}\in M(\Psi,X^{\rho}),\int\Phi d\mu_{\rho}=\xi\right\}$$

\textbf{The part for ``$\leq$''.} Note that each suspension of $f$ is conjugate to the suspension of $f$ under $1$, the constant function with value $1$ \cite{Bowen-Walters}. A homeomorphism from $X^1$ to $X^{\rho}$ that
conjugates the flows is given by the map $(x,s)\mapsto(x,s\rho(x))$. So without lose of generality, we suppose that our $\rho\equiv1$.

We first observe that for any Borel set $Z\subset X^{1}$, one has
\begin{equation}\label{flow-leq-map}
  \htop(\Psi,Z)\leq \htop(\Psi^1,Z),
\end{equation}
where $\Psi^1$ is the time-$1$ map of the flow $\Psi$. Indeed, recall the definition of entropy for maps and flows. It is not hard to see that
$$C(Z;s,\eps,N)\leq M(Z,s,\eps,N)~\textrm{for any}~s\in\R,N>0,\eps>0,$$
which indicates \eqref{flow-leq-map}.

Moreover, for $(X^1,\Psi^1)$, we apply theorem \ref{thm-level-of-irreg} and get that either $X^1_{\Phi}(\Psi^1,c,d)$ is empty or
\begin{displaymath}
\htop\left(T,X^1_{\Phi}(\Psi^1,c,d)\right)=\min_{\xi=c,d}~\sup~\left\{h_{\mu}(T)~|~\mu\in M(\Psi^1,X^1)~and~\int\Phi d\mu=\xi\right\}.
\end{displaymath}
Then for any $\gamma>0$, we choose a $\nu\in M(\Psi^1,X^1)$ with $\int\Phi d\nu=c$ such that
$$\htop\left(T,X^1_{\Phi}(\Psi^1,c,d)\right)-\gamma<h_{\nu}(T).$$
For such $\nu$, we construct a measure $\mu$ by
$$\int_{X^1}\zeta(x,s)d\mu=\int_{X^1}\int_0^1\zeta(g_t(x,s))dt d\nu$$
for all $\zeta\in C(X^1,\R)$.
A straight calculation shows that
$$\int_{X^1}\zeta(x,s)d\mu=\int_{X^1}\zeta(g_t(x,s))d\mu~\textrm{for any}~t\in\R.$$
Thus $\mu\in M(\Psi,X^1)$. One also notes that \cite{Abramov,Parry-Pollicott} $h_{\mu}=h_{\nu}$. In addition,
$$\int\Phi d\mu=\int_{X^1}\int_0^1\Phi(g_t(x,s))dt d\nu=\int_0^1cdt=c.$$
Meanwhile, since our $\rho\equiv1$, a simple estimation similar to that of \cite[Lemma 5.3]{Thompson2010} shows that
$$X^1_{\Phi}(\Psi^1,c,d)=X^1_{\Phi}(\Psi,c,d).$$
So we have
$$\htop(\Psi,X^{1}_{\Phi}(\Psi,c,d))-\gamma< \sup~\left\{h_{\mu}(\Psi)~|~\mu\in M(\Psi,X^{1})~and~\int\Phi d\mu=c\right\}.$$
Similarly, we can get
$$\htop(\Psi,X^{1}_{\Phi}(\Psi,c,d))-\gamma< \sup~\left\{h_{\mu}(\Psi)~|~\mu\in M(\Psi,X^{1})~and~\int\Phi d\mu=d\right\}.$$
By the arbitrariness of $\gamma$, we obtain
$$\htop(\Psi,X^{1}_{\Phi}(\Psi,c,d))\leq \min_{\xi=c,d}~\sup~\left\{h_{\mu}(\Psi)~|~\mu\in M(\Psi,X^{1})~and~\int\Phi d\mu=\xi\right\}.$$

\end{proof}

\section*{Acknowlegements}  The research of X. Tian was  supported by National Natural Science Foundation of China (grant no. 11301088).

\section*{References}

\begin{enumerate}
\bibitem{Abramov} L. M. Abramov, On the entropy of a flow, Doklady Akademii Nauk SSSR  128 (1959),
pp. 873-875.
\bibitem{BPS} L. Barreira, Y. Pesin and J. Schmeling, On a general concept of multifractality: multifractal spectra for
dimensions, entropies, and Lyapunov exponents, Chaos 7 (1997), no. 1, 27-38.
\bibitem{Barreira-Schmeling2000} L. Barreira, J. Schmeling, Sets of ¡°non-typical¡± points have full topological entropy and full Hausdorff dimension, Israel Journal of Mathematics, 2000, 116(1): 29-70.
\bibitem{Barreira-Schmeling2002} L. Barreira, B. Saussol, J. Schmeling, Higher-dimensional multifractal analysis, Journal des Mathematiques Pures et Appliquees, 2002, 81(1): 67-91.

    \bibitem{Bl} A. M. Blokh,  Decomposition of dynamical systems on an interval,  Uspekhi Matematicheskikh Nauk  38 (5)  (1983), 179-180.

\bibitem{Bowen1971}  R. Bowen, Periodic points and measures for Axiom A diffeomorphisms, Transactions of the American
Mathematical Society 154 (1971), 377-397.
\bibitem{Bowen} R. Bowen, Topological entropy for noncompact sets, Transactions of the American
Mathematical Society 184 (1973), 125-136.
\bibitem{Bowen-Walters} R. Bowen and P. Walters, Expansive one-parameter flows, Journal of Differential Equations  12
(1972), pp. 180-193.

\bibitem{Buzzi} J. Buzzi, Specification on the interval, Transactions of the American
Mathematical Society 349  (1997), no. 7, 2737-2754.

\bibitem{Chen-Pei} E. Chen and Y. Pei, On the variational principle for the topological pressure for certain non-compact sets, Science China. Mathematics   53  (2010),  no. 4, 1117-1128.

\bibitem{CTS}   E. Chen, K. Tassilo and L. Shu, Topological entropy for divergence points, Ergodic
Theory and Dynamical Systems 25 (2005), 1173-1208.
\bibitem{Fan-Feng} A. Fan and D. Feng, On the distribution of long-term time averages on symbolic space, Journal of Statistical
Physics 99, 813-56.
\bibitem{FFW} A. Fan, D. Feng and J. Wu, Recurrence, dimensions and entropy, Journal of the London Mathematical Society 64 (2001), 229-244.

\bibitem{Parry-Pollicott} W. Parry and M. Pollicott, Zeta functions and the periodic orbit structure of hyperbolic dynamics,
Aste\'risque, Vol. 187-188, Soc. Math, France, 1990.
\bibitem{Pesin} Y. Pesin, Dimension Theory in Dynamical Systems, Contemporary views and applications. Chicago Lectures in Mathematics. University of Chicago Press, Chicago, IL, 1997. xii+304 pp. ISBN: 0-226-66221-7; 0-226-66222-5.
\bibitem{PS2005} C. E. Pfister, W. G. Sullivan, Large deviations estimates for dynamical systems
without the specification property, Application to the $\beta$-shifts, Nonlinearity 18 (2005),
237-261.
\bibitem{PS2007} C. E. Pfister, W. G. Sullivan, On the topological entropy of saturated sets,  Ergodic
Theory and Dynamical Systems 27 (2007), 929-956.

\bibitem{Ruelle} D. Ruelle, Historic behaviour in smooth dynamical systems, Global Analysis of Dynamical
Systems  (H. W. Broer, B. Krauskopf, and G. Vegter, eds.), Bristol: Institute of Physics Publishing, 2001.
\bibitem{Sigmund1974}    K. Sigmund, On dynamical systems with the specification property, Transactions of the American
Mathematical Society 190 (1974), 285-299.

\bibitem{Takens-Verbitskiy} F. Takens and E. Verbitskiy, On the variational principle for the topological entropy of certain
non-compact sets, Ergodic
Theory and Dynamical Systems 23 (2003), no. 1, 317-348.
\bibitem{Thompson2009} D. J. Thompson, A variational principle for topological pressure for certain non-compact sets, Journal of the London Mathematical Society 80 (2009), no. 3, 585-602.
\bibitem{Thompson2012}   D. J. Thompson, Irregular sets, the $\beta$-transformation and the almost specification property, Transactions of the American
Mathematical Society 364 (2012), 5395-5414.
    \bibitem{Thompson2010}   D. J. Thompson, The irregular set for maps with the specification property has full topological pressure, Dynamical Systems, 25 (2010), 25-51.
 \bibitem{Walters}   P. Walters, An introduction to ergodic theory. Springer, Berlin. 2001.

 \end{enumerate}

\end{document}